\documentclass[12pt]{amsart}

\usepackage{etex}
\usepackage[english]{babel}
\usepackage[cp1250]{inputenc}
\usepackage{amssymb,amsthm,amsfonts,amsmath}
\usepackage{mathrsfs}
\usepackage{verbatim}
\usepackage{ulem}
\usepackage{mathtools}

\usepackage{url}
\usepackage{amsopn}
\usepackage{graphicx}
\usepackage[usenames, dvipsnames]{xcolor}
\DeclareGraphicsExtensions{.pdf,.png,.jpg}

\usepackage[shortlabels]{enumitem}
\usepackage{titletoc}
\definecolor{darkblue}{rgb}{0.0, 0.0, 0.55}
\usepackage[colorlinks,linkcolor=BrickRed,citecolor=OliveGreen,urlcolor=darkblue,hypertexnames=true]{hyperref}
\usepackage[a4paper,margin=2.5cm]{geometry}
\hfuzz=5.002pt

\usepackage[baseline]{euflag}

\allowdisplaybreaks

\linespread{1.03}

\setcounter{tocdepth}{3}

\usepackage{tikz}
\usepackage{tikz-3dplot}

\tdplotsetmaincoords{80}{45}
\tdplotsetrotatedcoords{-90}{180}{-90}

\tikzset{surface/.style={fill opacity=.4}}

\newcommand{\coneback}[4][]{
  \draw[draw=blue!70!black, fill=blue!40!white, fill opacity=.6,canvas is xy plane at z=#2, #1] (45-#4:#3) arc (45-#4:225+#4:#3) -- (O) --cycle;
  }
\newcommand{\conefront}[4][]{
  \draw[draw=blue!70!black, fill=blue!40!white, fill opacity=.6,canvas is xy plane at z=#2, #1] (45-#4:#3) arc (45-#4:-135+#4:#3) -- (O) --cycle;
  }

\newcommand{\conebackred}[4][]{
  \draw[draw=red!70!black, fill=red!40!white, fill opacity=.6, canvas is xy plane at z=#2, #1] (45-#4:#3) arc (45-#4:225+#4:#3) -- (O) --cycle;
  }
\newcommand{\conefrontred}[4][]{
  \draw[draw=red!70!black, fill=red!40!white, fill opacity=.6, canvas is xy plane at z=#2, #1] (45-#4:#3) arc (45-#4:-135+#4:#3) -- (O) --cycle;
  }

\DeclareMathOperator{\symm}{Diff}

\DeclareMathOperator{\Pos}{POS}
\DeclareMathOperator{\coPos}{COP}
\DeclareMathOperator{\cop}{COP}
\DeclareMathOperator{\COP}{COP}
\DeclareMathOperator{\DNN}{DNN}
\DeclareMathOperator{\psd}{PSD}

\DeclareMathOperator{\SPN}{SPN}
\DeclareMathOperator{\nn}{NN}
\DeclareMathOperator{\Sq}{SOS}
\DeclareMathOperator{\Lf}{LF}
\DeclareMathOperator{\Sos}{SOS}
\DeclareMathOperator{\Vol}{Vol}

\DeclareMathOperator{\CP}{CP}
\DeclareMathOperator{\cp}{CP}
\DeclareMathOperator{\pr}{pr}

\DeclareMathOperator{\Tr}{tr}
\DeclareMathOperator{\diag}{diag}
\DeclareMathOperator{\vrad}{vrad}
\DeclareMathOperator{\size}{size}

\newcommand{\CR}[1]{\begin{color}{red}#1\end{color}}

\numberwithin{equation}{section}

\newcommand{\x}{{\tt x}}

\newcommand{\Sym}{\mathbb{S}}

\newcommand{\RR}{\mathbb R}

\newcommand{\NN}{\mathbb N}

\newcommand{\sym}{\mathbb S}

\newcommand{\cC}{\mathcal C}

\newcommand{\cH}{\mathcal H}

\newcommand{\cL}{\mathcal L}
\newcommand{\cQ}{\mathcal Q}

\newcommand{\dd}{{\rm d}}

\newcommand{\cM}{\mathcal M}

%
%
%
\newtheorem{theorem}{Theorem}[section]
\newtheorem{corollary}[theorem]{Corollary}
\newtheorem{lemma}[theorem]{Lemma}
\newtheorem{proposition}[theorem]{Proposition}

\theoremstyle{definition}
\newtheorem{definition}[theorem]{Definition}
\newtheorem{remark}[theorem]{Remark}


\title[A random copositive matrix is CP with positive probability]{
A random copositive matrix is completely positive with positive probability}

\author[I.\ Klep]{Igor Klep${}^{1,Q}$}
\address{Igor Klep, Faculty of Mathematics and Physics, Department of Mathematics, University of Ljubljana, Slovenia}
\email{igor.klep@fmf.uni-lj.si}
\thanks{${}^1$Supported by the Slovenian Research Agency 
program P1-0222 and grants J1-50002,
 J1-2453, N1-0217 and J1-3004.}

\author[T.\ \v Strekelj]{Tea \v Strekelj${}^2$}
\address{Tea \v Strekelj, Institute of Mathematics and Physics, Ljubljana, Slovenia
   }
   \email{tea.strekelj@fmf.uni-lj.si}
\thanks{${}^Q$This work was performed within the project COMPUTE, funded within the QuantERA II Programme that has received funding from the EU's H2020 research and innovation programme under the GA No 101017733 {\normalsize\euflag}}

\author[A. Zalar]{Alja\v z Zalar${}^{3,Q}$}
\address{Alja\v z Zalar, 
Faculty of Computer and Information Science, University of Ljubljana  \& 
Faculty of Mathematics and Physics, University of Ljubljana  \&
Institute of Mathematics, Physics and Mechanics, Ljubljana, Slovenia.}
\email{aljaz.zalar@fri.uni-lj.si}
\thanks{${}^3$Supported by the Slovenian Research Agency 
program P1-0228 and grants J1-50002, J1-2453, J1-3004.}

\subjclass[2020]{13J30, 47L07, 52A40 (Primary); 90C22, 90C27 (Secondary)}

\date{\today}
\keywords{copositive cone, positive polynomial, sum of squares, convex cone, completely positive matrix}

\begin{document}

\setcounter{tocdepth}{3}
\contentsmargin{2.55em}
\dottedcontents{section}[3.8em]{}{2.3em}{.4pc}
\dottedcontents{subsection}[6.1em]{}{3.2em}{.4pc}
\dottedcontents{subsubsection}[8.4em]{}{4.1em}{.4pc}

\makeatletter
\newcommand{\mycontentsbox}{%
{
\addtolength{\parskip}{.3pt}
\tableofcontents}}
\def\enddoc@text{\ifx\@empty\@translators \else\@settranslators\fi
\ifx\@empty\addresses \else\@setaddresses\fi
\newpage\mycontentsbox\newpage\printindex}
\makeatother

\begin{abstract}
	An $n\times n$ symmetric matrix $A$ is copositive if the quadratic form $x^TAx$	
	is nonnegative on the nonnegative orthant $\RR^{n}_{\geq 0}$. 
  	The cone of copositive matrices strictly contains the cone of 
        completely positive matrices, i.e., all matrices
        of the form $BB^T$ for some $n\times r$ matrix $B$ with nonnegative entries. 
        The main result, 
        proved using Blekherman's real algebraic geometry inspired techniques 
        and tools of convex geometry, 
        shows that asymptotically, as $n$ goes to infinity, 
        the ratio of volume radii of the two cones is strictly positive.
        Consequently, the same holds true for the ratio
        of volume radii of any two cones sandwiched between them,
        e.g., the cones of positive semidefinite matrices, matrices with nonnegative entries, their intersection and their Minkowski sum. \looseness=-1
\end{abstract}

\maketitle

\section{Introduction}\label{introd}
Copositive and completely positive matrices arise in many areas, including block designs in combinatorial analysis, complementarity problems in computational mechanics, exchangeable probability distributions \cite{BDSM15, BSM21}, more recently even in data mining and clustering \cite{DHS05} as well as in dynamical systems and control theory \cite{MS07}.
Further, many combinatorial and nonconvex quadratic optimization problems can be formulated as linear problems over the cones of either copositive or completely positive matrices. Thus 
also in  mathematical optimization copositive and completely positive matrices have received considerable attention in recent years. 
This area is called copositive (resp., completely positive) programming \cite{Bom12, Dur10}, and  can be seen as a generalization of semidefinite programming, where the cone of positive semidefinite matrices is replaced by the cone of copositive (resp., completely positive) matrices. In this paper we estimate the asymptotic volumes of these two cones of matrices.

\subsection{Notation}
    For $n\in \NN$ let $M_n(\RR)$ be the vector space of $n\times n$ real matrices 
    and let $\Sym_n=\left\{ A\in M_n(\RR)\colon A^T=A\right\}$ be its subspace of real symmetric matrices,
    where $^T$ stands for the usual transposition of matrices.
    Let $\RR[\x]$ be the vector space of real polynomials in the variables $\x=(x_1,\ldots,x_n)$, and
     $\RR[\x]_k$ its subspace of \textbf{forms of degree} $k$, i.e., 
    homogeneous polynomials from $\RR[\x]$ of degree $k$.
    To a matrix 
        $A=[a_{ij}]_{i,j=1}^n\in \Sym_n$ 
    we associate the quadratic form 
    \begin{equation}
	\label{correspondence-quadratic}
		p_A(\x):=\x^T A\x=\sum_{i,j=1}^n a_{ij}x_ix_j\in \RR[\x]_2.
	\end{equation}

\subsection{Basic definitions}	The main goal of this paper is to estimate asymptotic volumes (as $n$ goes to infinity) of the cones of the following classes of matrices.

\begin{definition}
\label{def-cones}
    A matrix $A\in \Sym_n$ is:
    \begin{enumerate}
    \item 
    \textbf{copositive} 
    if $p_A$ is nonnegative  
	on the nonnegative orthant 
		$$\RR^{n}_{\geq 0}:=\{(x_1,\ldots,x_n)\colon x_i\geq 0,\ i=1,\ldots,n\},$$
	i.e., $p_A(\x)\geq 0$ for every $\x\in\RR^{n}_{\geq 0}$.
    Equivalently, $A$ is copositive iff the quartic form 
     \begin{equation}\label{eq:q_A}
     q_A(\x):=p_A(x_1^2,\ldots,x_n^2)\in \RR[\x]_4
     \end{equation}
    is nonnegative on $\RR^n$.        
    We write $\coPos_n$ for the cone of all $n\times n$ copositive matrices.
    \item
    \textbf{positive semidefinite (PSD)}	 
        if all of its eigenvalues are nonnegative.
    Equivalently, $A$ is PSD 
        iff $p_A(\x)\geq 0$ for all $\x\in \RR^n$
        iff $A=BB^T$ for some matrix $B\in M_n(\RR)$.
	We write $A\succeq 0$ to denote that $A$ is PSD and $\psd_n$ stands for the cone of all $n\times n$ PSD matrices.
    \item   
        \textbf{nonnegative (NN)}
        if all of its entries are nonnegative, i.e., 
        $A=[a_{ij}]_{i,j=1}^n$ with 
        $a_{ij}\geq 0$ for $i,j=1,\ldots,n$.
    We write $\nn_n$ for the cone of all $n\times n$ NN matrices.
    \item 
    \textbf{SPN}
    (sum of a positive semidefinite matrix and a nonnegative one)
        if it is of the form $A=P+N$, 
        where 
            $P\in \psd_n$
        and 
            $N\in \nn_n$.
    We write $\SPN_n:=\psd_n+\nn_n$ for the cone of all $n\times n$ SPN matrices.
    \item   
    \textbf{doubly nonnegative (DNN)}
        if it is PSD and NN.
    We write $\DNN_n:=\psd_n\cap \nn_n$ for the cone of all $n\times n$ DNN matrices.
    \item   
    \textbf{completely positive (CP)}{\footnote{Despite the similar name, the CP matrices considered here are not related to the CP maps ubiquitous in operator algebra \cite{Pau02}.}}
        if $A=BB^T$ for some $r\in\NN$ and $n\times r$ entrywise nonnegative matrix $B$.
    We write $\cp_n$ for the cone of all $n\times n$ CP matrices.
    \end{enumerate}
\end{definition}
    Clearly,
    \begin{equation}
        \label{101222-1515}
            \coPos_n
            \supseteq 
                \SPN_n
            \supseteq 
                \psd_n\cup\nn_n
            \supseteq
                \DNN_n
            \supseteq
                \cp_n.
    \end{equation}
    

\subsection{Main result}
\label{subsec-main-results}
Let $V$ be a finite-dimensional Hilbert space equipped with the pushforward measure of the Lebesgue measure on $\RR^{\dim V}$. The \textbf{volume radius} $\vrad(C)$ of a
compact measurable set $C$ 
is defined by\looseness=-1
	$$\vrad(C)=\left(\frac{\Vol(C)}{\Vol(B)}\right)^{1/\dim V},$$
where $B$ is the unit ball in $V$.
(See Remark \ref{volumes-vs-vrads} below for a discussion of the meaning of $\vrad$.)

To obtain compact sets from our convex cones in \eqref{101222-1515} we shall intersect them with suitable hyperplanes.
In the usual Frobenius inner product on $\sym_n$ (i.e., $\langle A,B\rangle=\Tr(AB)$),
one of the most natural choices of a hyperplane is either a constant trace hyperplane $\cH_{\Tr}$, e.g., $\cH_{\Tr}:=\{A\in \sym_n\colon \Tr(A)=1\}$, or a hyperplane $\cH_{\sum}$ of a constant sum of all the entries, e.g., 
$\cH_{\sum}:=\{[a_{ij}]_{i,j=1}^n\in \sym_n\colon \sum_{i,j=1}^n a_{ij}=1\}$. 
However, the intersections 
$\COP_n\cap \cH_{\Tr}$ and $\COP_n\cap \cH_{\sum}$ are not bounded.
So it is not even immediately clear what hyperplane to choose such that all the intersections with the cones in question are bounded, let alone the question of 
`fairness' for size difference estimates. (Discussed in detail in \S\ref{choice-hyperplane} below.)
To obtain a suitable hyperplane such that the intersections with the cones in \eqref{101222-1515} are compact and are fair size difference representatives,
we use the identification \eqref{correspondence} below of $\Sym_n$ 
with the vector subspace of all \textbf{even quartic forms} $\cQ$
in $\RR[\x]_4$, i.e., 
\begin{equation}
    \label{101222-1925}
        \cQ:=
            \big\{
                f\in \RR[\x]_4\colon 
                f(\x)=\sum_{i,j=1}^n a_{ij}x_i^2x_j^2\quad
                \text{where each }a_{ij}\in \RR \text{ and }
                \forall i,j:
                a_{ij}=a_{ji} 
            \big\}.
\end{equation}
We thus have a bijective 
correspondence
    \begin{equation}
	\label{correspondence}
    \Phi:\Sym_n \to \cQ,\qquad 
    A=[a_{ij}]_{i,j=1}^n 
    \mapsto 
    q_A(\x)=p_A(x_1^2,\ldots,x_n^2)\in \RR[\x]_4,
   \end{equation}
   where $p_A$ is as in 
   \eqref{correspondence-quadratic}.
One of the natural choices of an inner product on $\RR[\x]_4$ is the $L^2$ 
inner product, given by 
\begin{equation}
\label{L2-inner-product-intro}
\langle f,g\rangle=\int_{S^{n-1}}fg\; \dd\sigma,
\end{equation}
where $S^{n-1}$ is the unit sphere in $\RR^n$ 
and 
$\sigma$ is the rotation invariant probability measure on $S^{n-1}$.
A suitable choice of the hyperplane in $\RR[\x]_4$ (see \S\ref{choice-hyperplane}) 
is the affine hyperplane $\cL$ of forms from $\RR[\x]_{4}$ of average 1 on $S^{n-1}$, i.e.,
\begin{equation}
\label{hyperplane-forms-intro}
    \cL=
    \left\{ f\in \RR[\x]_{4}\colon \int_{S^{n-1}} f\; \dd\sigma=1 \right\}.
\end{equation}
Let $\langle \cdot,\cdot \rangle_{\Phi}$
be the inner product on 
$\sym_n$
making
$\Phi$  a Hilbert space isomorphism.
(See Remark \ref{correspondence-discussion} below for a concrete presentation of this inner product.)
It turns out (see Lemma \ref{090123-1550} below),
that $\cL_{\Sym_n}:=\Phi^{-1}(\cL)$ is defined by
\begin{align}
\label{hyperplanes-matricial}
\begin{split}
    \cL_{\Sym_n}
    &=
    \left\{ 
        [a_{ij}]_{i,j=1}^n\in \Sym_n
        \colon  
        3\Big(\sum_{i=1}^n a_{ii}\Big)+\sum_{i\neq j} a_{ij}=n(n+2)
    \right\}\\[0.2em]
    &=
    \left\{ 
        [a_{ij}]_{i,j=1}^n\in \Sym_n
        \colon 
        \langle
        [a_{ij}]_{i,j=1}^n,
        \mathbf{1}_{n,n}
        \rangle_{\Phi}
        =n(n+2)
    \right\},
\end{split}
\end{align}
where 
$\mathbf{1}_{n,n}$
is
the matrix of all ones.
Let $\cM_{\Sym_n}$ 
be the hyperplane 
in $\Sym_n$ defined by
\begin{align}
\label{hyperplanes-matricial-v2}
\begin{split}
    \cM_{\Sym_n}
    &=
    \left\{ 
        [a_{ij}]_{i,j=1}^n\in \Sym_n
        \colon  
        3\Big(\sum_{i=1}^n a_{ii}\Big)+\sum_{i\neq j} a_{ij}=0
    \right\},\\[0.2em]
    &=
    \left\{ 
        [a_{ij}]_{i,j=1}^n\in \Sym_n
        \colon 
        \langle
        [a_{ij}]_{i,j=1}^n,
        \mathbf{1}_{n,n}
        \rangle_{\Phi}
        =0
    \right\}.
\end{split}
\end{align}
With respect to the inner product $\langle\cdot,\cdot\rangle_{\Phi}$, $\cM_{\Sym_n}$ is a subspace of $\Sym_n$ of dimension $\dim\cM_{\Sym_n}=\frac{n(n+1)}{2}-1$ 
and so it is isomorphic to $\RR^{\dim{\cM_{\Sym_n}}}$ as a Hilbert space.
Let $S_{\cM_{\Sym_n}}$, $B_{\cM_{\Sym_n}}$ be the unit sphere and the unit ball in $\cM_{\Sym_n}$, respectively. 
Let $\psi:\RR^{\dim\cM_{\Sym_n}}\to \cM_{\Sym_n}$ be a unitary isomorphism and $\psi_{\ast}\mu$ the pushforward of the
Lebesgue measure $\mu$ on $\RR^{\dim\cM_{\Sym_n}}$ to $\cM_{\Sym_n}$, i.e., $\psi_\ast\mu(E):=\mu(\psi^{-1}(E))$ for every
Borel measurable set $E\subseteq \cM_{\Sym_n}$.

\begin{lemma} 
    \label{unique-pushforward}
	The measure of a Borel set $E\subseteq \cM_{\Sym_n}$ does not depend on the choice of the unitary isomorphism $\psi$, i.e.,
	if $\psi_1:\RR^{\dim\cM_{\Sym_n}}\to \cM_{\Sym_n}$ and $\psi_2:\RR^{\dim\cM_{\Sym_n}}\to \cM_{\Sym_n}$ are unitary isomorphisms, then
	$(\psi_1)_\ast\mu(E)=(\psi_2)_\ast\mu(E)$.
\end{lemma}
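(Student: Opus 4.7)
The plan is to reduce the claim to the well-known orthogonal invariance of Lebesgue measure on $\RR^{\dim\cM}$. Let $d:=\dim\cM$ and suppose $\psi_1,\psi_2:\RR^{d}\to\cM$ are unitary isomorphisms, where $\RR^d$ carries its standard Euclidean inner product and $\cM$ inherits the $L^2$ inner product from $\RR[\x]_4$. First, I would form the composition
\[
T:=\psi_2^{-1}\circ\psi_1\colon \RR^{d}\to\RR^{d}.
\]
Since $\psi_1$ and $\psi_2^{-1}$ are both linear isometries between inner product spaces, $T$ is a linear isometry of $\RR^{d}$, i.e., an element of the orthogonal group $O(d)$.

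Next, I would invoke the classical fact that Lebesgue measure $\mu$ on $\RR^{d}$ is invariant under $O(d)$, which follows from the change of variables formula since $|\det T|=1$ for $T\in O(d)$. Hence for every Borel set $F\subseteq \RR^{d}$ one has $\mu(T^{-1}(F))=\mu(F)$.

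Finally, for a Borel set $E\subseteq \cM$ I would compute
\[
(\psi_1)_\ast\mu(E)=\mu(\psi_1^{-1}(E))=\mu\bigl(T^{-1}(\psi_2^{-1}(E))\bigr)=\mu(\psi_2^{-1}(E))=(\psi_2)_\ast\mu(E),
\]
where the third equality uses orthogonal invariance applied to the Borel set $F:=\psi_2^{-1}(E)\subseteq\RR^{d}$ (which is indeed Borel because $\psi_2$ is a homeomorphism). This proves the lemma.

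The argument is essentially bookkeeping; the only point requiring mild care is verifying that $T$ is genuinely orthogonal, which uses the definition of ``unitary isomorphism'' in this real Hilbert space setting (preservation of the inner products on $\RR^{d}$ and on $\cM$), together with the fact that $\psi_2^{-1}$ is also a unitary isomorphism because $\psi_2$ is. There is no substantive obstacle; the statement is really the assertion that Lebesgue measure on a finite-dimensional real Hilbert space is canonically defined.
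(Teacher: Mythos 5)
Your proof is correct and is exactly the standard argument: the paper itself does not spell out a proof but defers to \cite[Lemma 1.4]{KMSZ19}, whose proof is the same reduction to the orthogonal invariance of Lebesgue measure via the composition $\psi_2^{-1}\circ\psi_1\in O(\dim\cM)$.
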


\begin{proof}
The proof of Lemma \ref{unique-pushforward} is routine and the same as the proof of 
\cite[Lemma 1.4]{KMSZ19}.
\end{proof}

We will compare the sizes of the convex cones $K$ from Definition \ref{def-cones} 
by comparing the volumes of their intersections 
with $\cL_{\Sym_n}$,
\begin{equation}
    \label{compact-section}
    K'_{\Sym_n}=K_{\Sym_n}\cap \cL_{\Sym_n},
\end{equation} 
when translated to $\cM_{\Sym_n}$ by subtracting $\mathbf{1}_{n,n}$, i.e.,
\begin{equation}
    \label{compact-section-translated}
\widetilde K_{\Sym_n}
:=K'_{\Sym_n}-\mathbf{1}_{n,n}
=\left\{
[a_{ij}]_{i,j=1}^n\colon [a_{ij}+1]_{i,j=1}^n\in K'_{\Sym_n}
\right\}. 
\end{equation}
Note that for all $K_{\Sym_n}$ from Definition \ref{def-cones},  
the section $K'_{\Sym_n}$ 
is a convex, compact, full-dimensional set
in the finite-dimensional {affine} hyperplane $\cL_{\Sym_n}$.

\begin{remark}
\label{boundedness-sections}
An argument for boundedness of $\COP_n'$ (and hence also all smaller cones) is as follows. 
Let $A:=[a_{ij}]_{i,j=1}^n\in \COP_n'$, 
$\mathbf{1}_n\in \RR^n$ be the vector of ones
and
$e_i\in \RR^n$ the standard coordinate vector with the only nonzero coordinate at position $i$, which is equal to 1.
Since $A$ is copositive, we have in particular that
$\mathbf{1}_n^TA\mathbf{1}_n=\sum_{i,j=1}^n a_{ij}\geq 0$
and
$e_i^TAe_i=a_{ii}\geq 0$ 
for each $i$.
Using this and the definition of $\cL_{\sym_n}$,
it follows that 
$0\leq a_{ii}\leq \frac{n(n+2)}{2}$ for each $i$. Further, copositivity of $A$ implies that
$(e_i+e_j)^TA(e_i + e_j)=a_{ii}+a_{jj}+ a_{ij}+ a_{ji}\geq 0$ 
for each $i\neq j$. Hence, $a_{ij}=a_{ji}\geq -\frac{n(n+2)}{2}$ for each $i\neq j$. 
But then also 
\begin{align*}
a_{ij}=
\frac{1}{2}\Big(
n(n+2)-3\sum_{i=1}^n a_{ii}
-\sum_{\substack{k\neq \ell,\\
\{k,\ell\}\neq \{i,j\}
}}a_{k\ell}\Big)
&\leq \frac{n(n+2)}{2}+(n^2-n-2)\frac{n(n+2)}{4}\\
&=\frac{1}{4}n(n+2)(n^2-n+2),
\end{align*}
where in the first inequality we used that $a_{ii}\geq 0$
and $a_{k\ell}\geq -\frac{n(n+2)}{2}$ for each $i$ and each $k\neq \ell$.
Therefore $A$ has all entries bounded in the absolute value and thus $\COP'_n$ is bounded.
\end{remark}

The main result of the paper is as follows.

\begin{theorem}
    \label{intro-vrad-of-our-sets} 
    We have that
    \begin{align*}
            \frac{1}{2^4\sqrt{2}}\cdot\frac{1}{n}
            &\leq 
            \vrad(\widetilde{\CP_n})
            \leq
            \vrad(\widetilde{\DNN_n})
            \leq
            \left\{
            \begin{array}{c}
            \vrad(\widetilde{\psd_n})\\[0.2em]
            \vrad(\widetilde{\nn_n})
            \end{array}\right.\\[0.3em]
            &\leq
            \vrad(\widetilde{\SPN_n})
            \leq 
            \vrad(\widetilde{\COP_n})
            \leq 
            2^3\cdot 3^2\cdot \sqrt{2}\cdot\frac{1}{n}.
    \end{align*}
    In particular,
    $$
    \frac{1}{2^{8}\cdot 3^2}
    \leq
    \frac
    {\vrad(\widetilde{\CP_n})}{\vrad(\widetilde{\COP_n})}
    \leq 
    1.
    $$
\end{theorem}

\begin{remark}
\begin{enumerate}
    \item 
        There are two possible  interpretations of the statements of Theorem \ref{intro-vrad-of-our-sets}. One is that the section $\widetilde{\CP_n}$
        is relatively large, while the other is that $\widetilde{\COP_n}$
        is relatively small. 
        We believe that the first interpretation is more likely, as we now explain.
        Let $B_{n,\max}$ be the unit ball in the max
        norm 
        $\|[a_{ij}]_{i,j=1}^n\|_{\max}=
        \max_{i,j} |a_{ij}|$
        on $\sym_n$. 
        (This is not a proper matrix norm, since it is not submultiplicative.)
        Pushforwarding the Lebesgue measure on $\RR^{\dim \sym_n}$
        to $\sym_n$, equipped with the usual Frobenius inner product,  it is clear that
        $\Vol(\nn_n\cap B_{n,\max})=
        (\frac{1}{2})^{\dim \sym_n}\cdot \Vol(B_{n,\max})$
        and hence 
        $\Vol(\COP_n\cap B_{n,\max})=
        c^{\dim \sym_n}\cdot \Vol(B_{n,\max})$
        for some $c\in [\frac{1}{2},1]$.
        It follows that $\COP_n\cap B_{n,\max}$ occupies a large portion of $B_{n,\max}$.
        Theorem \ref{intro-vrad-of-our-sets}
        suggests that
        also the portion of $B_{n,\max}$,
        which is occupied by $\CP_n\cap B_{n,\max}$, does not get arbitrarily small as $n\to\infty$, which is more surprising.
        However, this does not follow directly from Theorem \ref{intro-vrad-of-our-sets}, but remains as an open problem for future work. Comparing sizes of cones depends on the inner product and the compact sections chosen, and it is conceivable that in $B_{n,\max}$ the difference between the cones becomes visible (see also \S\ref{choice-hyperplane} below).
    \item Establishing precise constants to bound 
        the ratios between the volume radii of cones in Theorem \ref{intro-vrad-of-our-sets} would require more precise computations of volumes of the sections and remains an open question. The techniques we use also lean on some general inequalities from harmonic and convex analysis (see \S\ref{the-differential-metric}--\S\ref{RShepard}) and are suitable to determine the asymptotic behavior of sizes only up to a constant.  
\end{enumerate}
\end{remark}

\subsection{Applications and known results about the cones from Definition \ref{def-cones}}
The cones from Definition \ref{def-cones} are of great importance in optimization, 
since many combinatorial problems can be formulated as
conic linear programs over the largest cone $\COP_n$ or the smallest cone $\CP_n$ among them \cite{KP02,Bur09,RRW10,DR21}.
However, deciding whether a given matrix belongs to $\COP_n$ is co-NP-complete \cite{MK87} and NP-hard for $\CP_n$ \cite{DG14}.
To get a tractable approximation of the cone $\COP_n$ based on semidefinite programming, Parrilo \cite{Par00} proposed an increasing hierarchy of inner approximating cones
$K_n^{(r)}:=\{A\in \Sym_n\colon (\sum_{i=1}^n x_i^2)^r \cdot p_A(\x) \text{ is a sum of squares of forms}\}$.
Clearly, 
\begin{equation}
\label{Parrilo-approx}
	\bigcup_{r\in \NN_0}  K_n^{(r)}\subseteq \COP_n,
\end{equation}
and 
by a result of P\'olya \cite{Pol28}, 
	$\mathrm{int}(\COP_n)\subseteq \bigcup_{r\in \NN_0} K_n^{(r)}$.
By \cite[p.\ 63--64]{Par00}, $K_n^{(0)}=\SPN_n$. 
Since $\SPN_n=\COP_n$ for $n\leq 4$ \cite{MM62}, it follows that $K_n^{(0)}=\COP_n$
and the inclusion in \eqref{Parrilo-approx} is an equality for $n\leq 4$ (see also \cite{Dia62}).
For $n\geq 5$, $K_n^{(0)}$ is strictly contained in $\COP_n$; the so-called Horn matrix \cite{HN63}

\begin{equation}
\label{Horn-matrix}
H=
\begin{pmatrix*}[r]
	1 & -1 & 1 & 1 & -1 \\
	-1 & 1 & -1 & 1 & 1 \\
	1 & -1 & 1 & -1 & 1 \\
	1 & 1 & -1 & 1 & -1 \\
	-1 & 1 & 1 & -1 & 1 
\end{pmatrix*}
\end{equation}
is a standard example of a copositive matrix that is not $\SPN$.

Further, $H\in K_5^{(1)}$ \cite{Par00}, but $\COP_5\neq K_5^{(r)}$ for any $r\in \NN$ \cite{DDGH13}.
It has been very recently shown \cite{LV22b,SV+} that for $n=5$ the inclusion in \eqref{Parrilo-approx} is still the equality,
while for $n\geq 6$, the inclusion is strict \cite{LV22a}.
For a nice exposition on the classes of matrices defined above we refer the reader to \cite{BSM21}.
Some open problems regarding $\COP_n$, $\CP_n$ are presented in \cite{BDSM15}.

\begin{remark}
Theorem \ref{intro-vrad-of-our-sets} is thus of interest from a computational complexity viewpoint.
Unlike the Blekherman results in \cite{Ble06} which
imply that in general one cannot replace testing for positivity of a polynomial with testing whether a polynomial is a sum of squares,
our results suggest that in certain problems it could be feasible to replace the larger cone $\COP_n$ with each of the smaller cones
in \eqref{101222-1515} (where the smallest cone $\CP_n$ might again give a potentially equally hard problem to solve).
Further, Theorem \ref{intro-vrad-of-our-sets} can be seen as the first step towards quantifying tightness of the Parrilo $K_n^{(r)}$ 
hierarchy of approximations to $\COP_n$.
\end{remark}

\subsection{Idea of the proof of Theorem
\ref{intro-vrad-of-our-sets}
and the second main result}
To prove Theorem \ref{intro-vrad-of-our-sets} 
we lean on powerful techniques, 
developed by Blekherman \cite{Ble04,Ble06} and Barvinok-Blekherman \cite{BB05}
for comparing the cones of positive polynomials and sums of squares, fundamental objects of real algebraic geometry
\cite{BCR98,Mar08,Lau09,Sce09}.
In addition to these we will rely on some classical results of convex analysis \cite{RS57,BM87,MP90}. Below we briefly explain the idea of the proof.

Let $\cQ$ be as in 
\eqref{101222-1925}.
Under the correspondence
$\Phi$ as in
\eqref{correspondence}, 
$\coPos_n$ 
corresponds to 
nonnegative forms  
\begin{equation}
\label{nonnegative-quartics}
\Pos_\cQ:=
            \{
                f\in \cQ\colon f(\x)\geq 0\quad \text{for all } \x\in \RR^n
            \}
\end{equation}
from $\cQ$, 
$\psd_n$
corresponds to 
sums of squares of quadratic forms in the variables $x_1^2,\ldots,x_n^2$,
$\nn_n$
corresponds to 
forms in $\cQ$ with nonnegative coefficients,
$\SPN_n$
corresponds to 
sums of squares of all quadratic forms which belong to $\cQ$, i.e., 
    \begin{equation}
    \label{101222-2006}
    \Sq_\cQ:=\cQ
            \;\bigcap\;
            \Big\{
                f\in \RR[\x]_4\colon 
                f=\sum_{i} f_i^2\quad 
                \text{for some } f_i\in\RR[\x]_2
            \Big\}
    \end{equation}
and
$\cp_n$
corresponds to
corresponds to 
sums of squares of quadratic forms in variables 
		$x_1^2,\ldots,x_n^2$ 
	with nonnegative coefficients.
Thus comparing the cones from Definition
\ref{def-cones}
is equivalent to comparing
the corresponding cones of forms from $\cQ$.\looseness=-1

To estimate the gap between the cones of forms from $\cQ$ we compare the volumes of compact sections obtained by intersecting each with 
the affine hyperplane $\cL$ of \eqref{hyperplane-forms-intro}.
Note that by construction
this affine hyperplane corresponds to $\cL_{\Sym_n}$ from \eqref{hyperplanes-matricial}
under $\Phi$ and hence the sections correspond to sections $K'_{\Sym_n}$ from \eqref{compact-section}. 
For technical reasons we translate every section $\Phi(K'_{\Sym_n})$ from $\cL$ to become a subset 
$\widetilde{\Phi(K'_{\Sym_n})}$ in the hyperplane $\cM$ of forms from $\cQ$ with average 0 on $S^{n-1}$. 
The vector of translation is the negative of the polynomial $r(\x)=(x_1^2+\ldots+x_n^2)^{2}$,
i.e., 
$\widetilde{\Phi(K'_{\Sym_n})}=
\Phi(K'_{\Sym_n})-r$. (Note that under $\Phi$ the polynomial $r$ corresponds to the matrix $\mathbf{1}_{n,n}$.) 
With respect to the $L^2$ inner product \eqref{L2-inner-product-intro} this hyperplane is isomorphic to 
$\RR^{\dim{\cM_{\Sym_n}}}$ as a Hilbert space and can be equipped with the pushforward of the Lebesgue measure on $\RR^{\dim\cM_{\Sym_n}}$ under some unitary isomorphism.

    It turns out that it is crucial to introduce another subset of $\cQ$ to estimate the size of $\widetilde{\Phi(\cop_n')}$. 
    Namely, the set of all projections 
    to $\cQ$ of sums of 4-th powers of linear forms:
    \begin{equation}
    \label{linear-forms-projections}
    \Lf_\cQ:=
            \Big\{
               \pr_\cQ(f)\in \RR[\x]_4\colon 
                f=\sum_{i} f_i^4\quad 
                \text{for some } f_i\in\RR[\x]_1
            \Big\},
    \end{equation}
    where $\pr_\cQ:\RR[\x]_4\to \cQ$ is defined by:
    \begin{equation}
	\label{proj-Q}
    \pr_\cQ\big(
    \sum_{1\leq i\leq j\leq k\leq \ell\leq n}
    a_{ijk\ell} x_{i}x_jx_kx_\ell
    \big)
    =
    \sum_{1\leq i\leq j\leq n}
    a_{iijj}x_i^2x_j^2.
    \end{equation}
    The translate $\widetilde{\Lf_\cQ}:=\Lf_\cQ'-r$
    of the section $\Lf_\cQ':=\Lf_\cQ\cap \cL$
    of $\Lf_\cQ$ is dual to the section of $\Phi(\cop_n)$ in the so-called 
    differential metric \cite[Section 5]{Ble06}, originally called an apolar inner product (see \cite[p.\ 11]{Rez82} for a historical account).
    To get the needed lower bound on the size of $\widetilde{\Lf_\cQ}$ we compare it to size of the section $\widetilde{\Phi(\nn_n')}$ for which
    the lower bound is obtained by a version of the reverse Blaschke-Santal\'o inequality in the differential metric
	together with a self-duality of $\Phi(\nn_n)$.
    (For the original version of the inequality see \cite{BM87}, while for the version with explicit bounds on the value of the absolute constant appearing we refer to \cite{Kup08}.)
    Finally, to establish the upper bound on size of the section $\widetilde{\Phi(\cop_n)}$
    we use the Blaschke-Santal\'o inequality \cite[p.\ 90]{MP90} after proving that the origin is the Santal\'o point (see \S\ref{BSantalo}) of $\widetilde{\Lf_\cQ}$ (see Lemma \ref{Santalo-of-Lf}).

    In particular, we obtain the second main result of this paper.

\begin{theorem}
    \label{vrad-quartics} 
    We have that
    $$
    \frac{1}{2^{8}\cdot 3^2}
    \leq
    \frac
    {\vrad(\widetilde{\Sos_\cQ})}{\vrad(\widetilde{\Pos_\cQ})}
    \leq 
    1,
    $$
    where
    \begin{align*}
        \widetilde{\Pos_\cQ}&:=
    \left\{ f\in \cQ\colon  
        \int_{S^{n-1}}f\;\dd\sigma=0
        \quad\text{and}\quad
        f+(x_1^2+\ldots+x_n^2)^{2}\in \Pos_{\cQ}\right\},\\
    \widetilde{\Sos_\cQ}&:=
    \left\{ f\in \cQ\colon  
        \int_{S^{n-1}}f\;\dd\sigma=0
        \quad\text{and}\quad
        f+(x_1^2+\ldots+x_n^2)^{2}\in \Sos_{\cQ}\right\}.
    \end{align*}
\end{theorem}

\begin{remark}[Comment on the statement of Theorem \ref{vrad-quartics}]
In \cite{Ble06}
Blekherman established
estimates on the volume radii of compact sections of the cones of nonnegative forms and sums of squares forms. 
For a fixed degree bigger than 2, as the number of variables goes to infinity, the ratio between the volume radii goes to 0. 
In particular, this holds for quartic forms, while
Theorem \ref{vrad-quartics} states that this is not the case for even forms. This is slightly surprising, since even quartic forms are a relatively large subspace in all quartic forms and 
one would have expected that the ratio between the volume radii would follow the same asymptotics.
\end{remark}

\begin{remark}
[Difference with the original work of Blekherman \cite{Ble06}]
\label{subsec-comparison-Blek}
Next we explain the adaptations of Blekherman's original approach to compare the volumes of nonnegative forms and sums of squares forms needed, to give sufficiently tight estimates on the volume radii of the cones 
from Definition \ref{def-cones} we are interested in. 
To study these cones one could a priori use either of the one-to-one correspondences 
\eqref{correspondence-quadratic} or \eqref{correspondence} and then compare sizes of the corresponding cones 
in either quadratic or quartic forms, respectively.
However, in quadratic forms establishing volume estimates does not seem to be straightforward with specializing Blekherman's approach;
it is not clear how to handle copositivity to derive asymptotically tight estimates.
Working with the cones within even quartics using \eqref{correspondence} is more convenient. 
In the latter case one can specialize Blekherman's techniques, but the obtained estimates are not tight enough:
the upper bound on the volume radius of even SOS quartics turns out 
to be larger than the lower bound on nonnegative ones. 
Therefore one has to adapt the methods to obtain tighter estimates. The crucial observation to achieve this is to notice
that dilations of the difference bodies of the compact sections of $\Lf_{\cQ}$, resp.\ $\Phi(\cp_n)$, by absolute constants independent of the dimension, contain the compact section
of $\Phi(\nn_n)$ (see Lemma \ref{dualities-diff-metric} below). This fact together with the reverse Blaschke-Santal\'o inequality (see Corollary \ref{BSr-in-d-metric} below) suffices to obtain
 conclusive estimates.
\end{remark}

\begin{remark}
[Other results related to Theorem \ref{vrad-quartics}]
In \cite{BR21}, the authors compared the cone of nonnegative symmetric quartics and 
the cone of symmetric quartics that are sums of squares of quadratics. Similarly as in the setting of the present paper
and in sharp contrast to \cite{Ble06}, asymptotically the ratio on the volume radii of compact sections of these cones is strictly positive as the number of variables goes to infinity. However, the subspaces of quartic forms we study and the ones from \cite{BR21} are essentially different. 
For instance, 
the subspace of even symmetric quartics has empty interior in the subspace of even quartics.
Thus the methods we use are essentially different to the methods from \cite{BR21}.\looseness=-1

Specializing Blekherman's techniques \cite{Ble06} 
(see Remark \ref{subsec-comparison-Blek}),
in \cite{KMSZ19} two types of cones of linear maps between matrix spaces were compared. Namely, the larger cone of positive maps and the smaller cone of completely positive maps. Using the correspondence
analogous to
\eqref{correspondence} 
the problem is equivalent to comparing the cone of nonnegative biquadratic biforms with the smaller cone of biforms that are sums of squares of bilinear ones. 
The conclusion is similar to the case of all forms \cite{Ble06}:
as the number of variables goes to infinity,
the ratio between the volume radii goes to 0.
\end{remark}

\subsection{Reader's guide}
In Section \ref{prelim} we first translate the problem
of comparing cones from Definition \ref{def-cones} to the comparison of cones in even quartic forms (\S\ref{translation}). Then we discuss the impact of the choice of compact sections of cones on the comparison results (\S\ref{choice-hyperplane}). In \S\ref{prel-even-quartics}--\S\ref{the-differential-metric} we establish some preliminary results for even quartic forms and
in \S\ref{BSantalo}--\S\ref{RShepard} we
state some classical inequalities for the volume of a bounded convex set in $\RR^n$.
In Section \ref{volume-radii}
these results are then applied in a novel way, together with three additional lemmas, 
in the proofs of Theorems \ref{intro-vrad-of-our-sets}
and \ref{vrad-quartics} (see Theorem \ref{vrad-of-our-sets}).
Firstly, Lemma \ref{inclusion-of-sections}
establishes the dilation constants of the difference bodies (see \eqref{difference-body}) of $\widetilde{\Lf_{\cQ}}$ and $\widetilde{\Phi(\CP_n')}$, that contain $\widetilde{\Phi(\nn_{n}')}$.
Secondly,
Lemma \ref{dualities-diff-metric} states the self-duality of $\Phi(\nn_{n}')$ and the duality between $\Pos_{\cQ}$ and $\Lf_{\cQ}$ in the differential metric (see \eqref{the-differential-metric}). Thirdly,
Lemma \ref{Santalo-of-Lf} states that the origin is the Santal\'o point of $\widetilde{\Lf_{\cQ}}$.
\looseness=-1

\section{Preliminaries}
\label{prelim}

In this section we translate our main problem of comparing sizes of matrix cones to the problem of comparing sizes of cones in even quartic forms. Then we discuss the impact of the choice of the hyperplane, in which we will compare the sizes of the sections of the cones, on our main results. Furthermore, we establish some properties of even quartic forms needed in the proofs of our main results.
These properties are obtained by specializing the results from \cite{Ble04,Ble06} to our setting. We also recall some classical inequalities for the volume of a compact set.

\subsection{Cones from quartic forms under the correspondence $\Phi$}
\label{translation}

Let $\cQ$, $\Phi$ be as in \eqref{101222-1925}, \eqref{correspondence}, respectively. 
Under the correspondence $\Phi$ we have the following bijections:
\begin{enumerate}
\item $\coPos_n$ corresponds to nonnegative forms $\Pos_\cQ$ from $\cQ$ (see \eqref{nonnegative-quartics}).
\item $\psd_n$ corresponds to sums of squares of quadratic forms in the variables $x_1^2,\ldots,x_n^2$:
\begin{equation*}
    \psd_\cQ:=
            \Big\{
                f\in \cQ\colon 
                f=\sum_{i} f_i^2\quad 
                \text{for some } f_i(\x)=\sum_{j=1}^n f_{j}^{(i)}x_j^2\in\RR[\x]_2
            \Big\},
\end{equation*}
\item $\nn_n$ corresponds to  forms in $\cQ$ with nonnegative coefficients:
\begin{equation*}
    \label{101222-2002}
    \nn_\cQ:=
            \Big\{
                 f\in \cQ\colon 
                f(\x)=\sum_{i,j=1}^n a_{ij}x_i^2x_j^2\quad
                \text{where }a_{ij}\geq 0\text{ for }i,j=1,\ldots,n 
            \Big\}.
\end{equation*}
\item   
    $\SPN_n$ corresponds to  forms
    $\SPN_\cQ$ in $\cQ$ that are sums of 
    a form from $\psd_\cQ$ and a form from $\nn_\cQ$,
    i.e., $\SPN_\cQ:=\psd_\cQ+\nn_\cQ$.
    It turns out that $\Sos_\cQ=\SPN_\cQ$ 
        \cite[p.\ 63--64]{Par00},
    where 
    $\Sos_\cQ$ is as in \eqref{101222-2006}.
    \item   
    $\DNN_n$ corresponds to  forms $\DNN_\cQ$ in $\cQ$ that belong to the intersection of
    the cones $\psd_\cQ$ and $\nn_\cQ$, i.e.,
    $\DNN_\cQ:=\psd_\cQ\cap\nn_\cQ$.
    \item   
    $\cp_n$ corresponds to  sums of squares of quadratic forms in variables 
		$x_1^2,\ldots,x_n^2$ 
	with nonnegative coefficients:
\begin{equation*}
    \label{151222-1322}
    \cp_\cQ:=
            \Big\{
                f\in \cQ\colon 
                f=\sum_{i=1} f_i^2\;\;
                \text{for some } 
                f_i(\x)=\sum_{j=1}^n f_{j}^{(i)}x_j^2\in\RR[\x]_2
                \text{ with }
                f_{j}^{(i)}\geq 0 \text{ for each }i,j
            \Big\}.
\end{equation*}
\end{enumerate}
Let $\cC$ be the set of cones we are interested in:
    \begin{equation}
        \label{cones-of-forms-studied}
            \cC:=\{\Pos_\cQ,{\Sos_\cQ=\SPN_\cQ},\nn_\cQ,\psd_\cQ,\DNN_\cQ,\Lf_\cQ,\CP_\cQ\},
    \end{equation}
where $\Lf_{\cQ}$ is as in \eqref{linear-forms-projections}.
We will estimate the gap between these cones of forms by estimating the volumes of compact sections obtained by intersecting each with a suitably chosen affine hyperplane.
Let $S^{n-1}$ be the unit sphere in $\RR^n$ and
$\sigma$ the 
rotation invariant probability measure
on $S^{n-1}$.
The natural $L^2$ inner product (resp.\ $L^2$ norm) on $\RR[\x]_4$ is given by 
\begin{equation}
\label{L2-inner-product}
\langle f,g\rangle=\int_{S^{n-1}}fg\; \dd\sigma\qquad \Big(\text{resp.}\;
        \left\| f \right\|_2^2= \int_{S^{n-1}} f^2\ \dd\sigma.\Big)
\end{equation}

\begin{remark}
\label{ort-proj}
    The projection $\pr_\cQ$, defined by \eqref{proj-Q}, is the orthogonal projection onto $\cQ$
    w.r.t.\ the $L^2$ inner product. This follows by noticing that
    $\int_{S^{n-1}}x_{k_1}\cdots x_{k_8}\dd\sigma$,
    where $k_1,\ldots,k_8\in \{1,\ldots,n\}$,
    is zero if there is $j\in \{1,\ldots,n\}$ which appears an odd number of times among the indices 
    $k_1,\ldots,k_8$ \cite[Lemma 8]{Bar02}.
\end{remark}

Let $\cL$ be the {affine} hyperplane of forms from $\RR[\x]_{4}$ of average 1 on $S^{n-1}$, i.e.,
\begin{equation}
\label{hyperplane-forms}
    \cL=
    \left\{ f\in \RR[\x]_{4}\colon \int_{S^{n-1}} f\; \dd\sigma=1 \right\}.
\end{equation}
For every $K\in \cC$ let
    $K'$
be its intersection with $\cL$, that is,
\begin{equation}
\label{compact-sections-forms}
    K'=K\cap \cL.
\end{equation}
Note that for all $K\in\cC$ the section $K'$ 
is a convex, compact, full-dimensional set
in the finite-dimensional {affine} hyperplane $\cL$.

\begin{remark}
\label{boundedness-v2}
As argument for boundedness of the largest set $\Pos_\cQ'$ is as follows.
Let $f(\x):=\sum_{i,j=1}^n a_{ij}x_i^2x_j^2\in \Pos_{\cQ}'$.
Then $\int_{S^{n-1}} f\dd\sigma=1$ and the equalities \eqref{integral-of-x4}, \eqref{integral-of-x2y2} below, imply 
that $3\Big(\sum_{i=1}^n a_{ii}\Big)+\sum_{i\neq j} a_{ij}=n(n+2).$
By \eqref{correspondence-quadratic}, $f(\x)=p_A(\x)=\x^TA\x$, where $A=[a_{ij}]_{i,j=1}^n$.
Since $A\in \COP'_n$, from here on the argument for boundedness of all coefficients $a_{ij}$ in the absolute value 
is the same as in Remark \ref{boundedness-sections}.
\end{remark}

For technical reasons we translate every section $K'$ for $K\in\cC$ by subtracting the polynomial
$(x_1^2+\ldots+x_n^2)^{2}$:
$$
    \widetilde K:=K'-(x_1^2+\ldots+x_n^2)^{2}=
    \left\{ f\in \cQ\colon  
        f+(x_1^2+\ldots+x_n^2)^{2}\in K'\right\}.
$$
Let $\cM$  be the hyperplane of forms from $\cQ$ with average 0 on $S^{n-1}$, i.e., 
\begin{equation} 
    \label{M-def}
	\cM=
    \left\{ f\in \cQ\colon \int_{S^{n-1}} f\; \dd\sigma=0 \right\}. 
\end{equation}
Notice that for every $K\in \cC$,
$$
    \widetilde{K}\subseteq \cM.
$$
With respect to the $L^2$ inner product $\cM$ is a subspace of $\cQ$ of dimension $\dim\cM=\frac{n(n+1)}{2}-1$ 
and so it is isomorphic to $\RR^{\dim{\cM}}$ as a Hilbert space.
Let $S_{\cM}$, $B_{\cM}$ be the unit sphere and the unit ball in $\cM$, respectively. 
Let $\psi:\RR^{\dim\cM}\to \cM$ be a unitary isomorphism and $\psi_{\ast}\mu$ the pushforward of the
Lebesgue measure $\mu$ on $\RR^{\dim\cM}$ to $\cM$, i.e., $\psi_\ast\mu(E):=\mu(\psi^{-1}(E))$ for every
Borel measurable set $E\subseteq \cM$.

\begin{remark}
\label{correspondence-discussion}
Assume the notation form \S\ref{subsec-main-results}.
In this remark we give a concrete presentation of the inner product $\langle \cdot,\cdot\rangle_{\Phi}$,
making $\Phi$ (see \eqref{correspondence}) a Hilbert space isomorphism.

Let $E_{ij}$ be the usual $n\times n$ matrix units, i.e., $E_{ij}$ has only one nonzero entry at the position $(i,j)$, which is equal to 1.
For $\Phi$ to be a Hilbert space isomorphism, 
$$\langle E_{ij}+E_{ji},E_{k\ell}+E_{\ell k}\rangle_{\Phi}=
\langle \Phi(E_{ij}+E_{ji}),\Phi(E_{k\ell}+E_{\ell k})\rangle
=
\int_{S^{n-1}} 4x_i^2x_j^2x_k^2x_{\ell}^2\;\dd\sigma
$$
should hold for each $i,j,k,\ell$.
Let $c_n=\frac{1}{n(n+2)(n+4)(n+6)}$.
Using \cite[Lemma 8]{Bar02}, we have that
\begin{align*}
    \langle E_{ii},E_{jj} \rangle_{\Phi}
    &:=
    \left\{
    \begin{array}{rl}
    105\cdot c_n,&   \text{if }i=j,\\[0.3em]
    9\cdot c_n,&   \text{if }i\neq j,
    \end{array}
    \right.\\[0.3em]
    \langle E_{ii},E_{j\ell}+E_{\ell j} \rangle_{\Phi}
    &:=
    \left\{
    \begin{array}{rl}
    30\cdot c_n,&   \text{if }j\neq \ell \text{ and }(i=j \text{ or } i=\ell),\\[0.3em]
    6\cdot c_n,&   \text{if } i,j,\ell\text{ are pairwise different},
    \end{array}
    \right.\\[0.3em]
    \langle E_{ij}+E_{ji},E_{k\ell}+E_{\ell k} \rangle_{\Phi}
    &:=
    \left\{
    \begin{array}{rl}
    36\cdot c_n,&   \text{if }i\neq j, k\neq \ell\text{ and among }i,j,k,\ell\text{ there are}\\ 
    &\text{exactly two pairs of equal indices},
        \\[0.3em]
    12\cdot c_n,&   \text{if }i\neq j, k\neq \ell \text{ and among }i,j,k,\ell\text{ there are}\\
    &\text{exactly two equal indices},\\[0.3em]
    4\cdot c_n,& \text{if }
        i,j,k,\ell \text{ are pairwise different}.
    \end{array}
    \right..
\end{align*}
\smallskip

Let $\mathcal D=\{\SPN,\nn,\psd,\DNN,\CP\}$.
Under the correspondence given by $\Phi$, we have the following equalities:
\begin{align*}
\Phi(\COP_{n})&=\Pos_{\cQ},\qquad 
\Phi(K_{n})=K_{\cQ}\;\;\text{for }K\in \mathcal D,\qquad
\Phi(\mathbf{1}_{n,n})=(x_1^2+\ldots+x_n^2)^2.
\end{align*}
Thus comparing the cones from Definition \ref{def-cones} 
can be done by passing to the corresponding cones in $\cQ$.
Intersecting the cones in $\cQ$ corresponds to intersecting the cones in $\sym_n$ by $\Phi^{-1}(\cL)$. By Lemma \ref{090123-1550} below, we have that
$\Phi^{-1}(\cL)=\cL_{\sym_n}$ and $\Phi^{-1}(\cM)=\cM_{\sym_n}$.
Thus,
\begin{align*}
\Phi(\widetilde{\COP_{n}})&=\widetilde{\Pos_{\cQ}},\qquad 
\Phi(\widetilde{K_{n}})=\widetilde{K_{\cQ}}\;\;\text{for }K\in \mathcal D.
\end{align*}
\end{remark}

\subsection{Discussion of the choice of the hyperplane $\cL$ and its impact on our main results}
\label{choice-hyperplane}
To compare the sizes of two cones $K_1,K_2\subseteq \RR^m$ it is natural to choose a compact set $C$ and compare the sizes of the intersections $K_1\cap C$ and $K_2\cap C$.
The choice of the set $C$ is not arbitrary, since it can have a large affect on the ratio 
$\frac{\size(K_1\;\cap \;C )}{\size(K_2\;\cap\; C)}$. Namely, compare the extreme situations where $C\subset K_1\cap K_2$ (the ratio is 1) or 
$C\subset K_2\setminus K_1$ (the ratio is 0).
Clearly, if we are interested in the sizes of the cones in a specific region $C$ in the space, then the ratio of sizes of the intersections with $C$ is a proper measure. However, for a general comparison of sizes it is more appropriate to choose $C$ that is symmetric enough,
so that it captures the picture of the whole space as uniformly as possible.
Such a suitable choice is to take the unit ball in some metric as seen on Figure \ref{fig:intersection-ball}.
\begin{center}
\begin{figure}[!h]
\begin{tikzpicture}[tdplot_main_coords]
  \coordinate (O) at (0,0,0);
  \draw[gray!60!white, ->] (-6,0,0) -- (6,0,0);
  \draw[gray!60!white, ->] (0,-6,0) -- (0,6,0);
  \draw[gray!60!white, ->] (O) -- (0,0,5);
   \draw [black!60!white](0,0,0) circle (1.5cm);
    \shade[ball color=black!10!white,opacity=0.20] (0,0,0) circle (1.5cm);
    \draw[black!60!white] (-1.06,-1.06,0) arc (180:360:1.5cm and 0.5cm);
 \draw[black!20!white] (-1.06,-1.06,0) arc (180:0:1.5cm and 0.5cm);
 
  \conebackred[surface]{3}{3}{10}
  \coneback[surface]{4}{2}{15}
  
  \conefront[surface]{4}{2}{15}
  \conefrontred[surface]{3}{3}{10}

\node[right=1cm, color = red!40!white] at (current bounding box.east) {
    {\rule{2em}{2pt}} \textcolor{black}{$K_1$}
};

\node[below=1cm, left=0cm, color=blue!40!white] at (current bounding box.east) {
    {\rule{2em}{2pt}} \textcolor{black}{$K_2$}
};

\node[below=2cm, left=-1cm, color=black!20!white] at (current bounding box.east) {
    {\rule{2em}{2pt}} \textcolor{black}{unit ball}
};

    \end{tikzpicture}
\label{figure1}\caption{Intersection of the cones with the unit ball in some metric.}
  \label{fig:intersection-ball}
\end{figure}
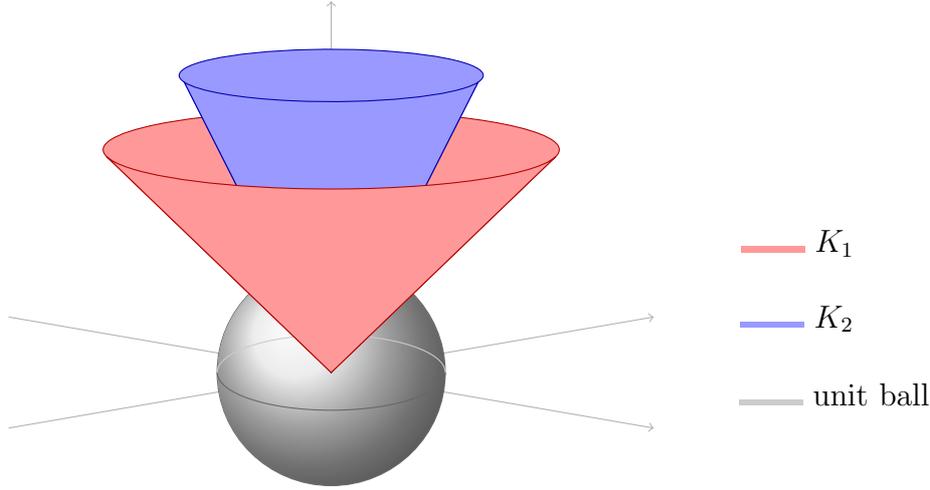
\end{center}

In our setting we equipped the vector space $\cQ$ with the $L^2$ norm and thus a natural choice for $C$ is the unit ball $B_2=\{f\in \cQ\colon \|f\|_2\leq 1\}$ in the $L^2$ norm.
Writing down the elements in $B_2$ explicitly, one ends up with a quadratic inequality
$\sum_{i,j,k,\ell} c_{ijk\ell} a_{ij}a_{k\ell}\leq 1$ on the coefficients $a_{ij}$ of $f(\x)=\sum_{i,j=1}^n a_{ij} x_i^2x_j^2$, where $c_{ijk\ell}$ are some constants.
Since the inequality is quadratic in $a_{ij}$, tight volume estimates of $K\cap B_2$ for $K\in \cC$ (see \eqref{cones-of-forms-studied}),
such that the gap between the lower and the upper bound follows the same asymptotics as $n\to \infty$, cannot be easily derived. Replacing the $L^2$ norm with a $L^p$ norm for $p>2$ brings conditions on $a_{ij}$, which are even more difficult to work with, since they are of degree $p$ in $a_{ij}$.
Replacing the $L^2$ norm with the $L^1$ norm $\|\cdot\|_1$, the unit ball is
\begin{equation}
\label{unit-ball-L1}
B_1
=\{f\in \cQ\colon \|f\|_1\leq 1\}
=\Big\{\sum_{i,j=1}^n a_{ij} x_i^2x_j^2\in \cQ\colon 
\int_{S^{n-1}} |\sum_{i,j=1}^n a_{ij} x_i^2x_j^2|\leq 1
\Big\}.
\end{equation}
Due to the presence of the absolute value in the integrand of \eqref{unit-ball-L1},
it is still not easy to derive tight volume estimates. 
However, there is a norm, in which ratios can be easily derived, i.e., the $\ell^1$ norm $\|\cdot\|_1'$ of the vector of coefficients of the polynomial. The unit ball $B_1'$
in this norm is 
$$
B_1'
=\{f\in \cQ\colon \|f\|_1'\leq 1\}
=\Big\{\sum_{i,j=1}^n a_{ij} x_i^2x_j^2\in \cQ\colon 
\sum_{i,j=1}^n |a_{ij} |\leq 1
\Big\}.
$$
By a simple argument using the Rogers-Shepard inequality (Theorem \ref{RS} below) 
it is easy to see that 
\begin{equation}
\label{vrad-ratio-ell-1}
\frac{1}{12}\leq\frac{\vrad(\CP_{\cQ}\;\cap\; B_1')}{\vrad (\Pos_{\cQ}\;\cap\; B_1')}\leq 1.
\end{equation}
Indeed, let
$\displaystyle f=\sum_{1\leq i\leq j\leq n} a_{ij}x_i^2x_j^2\in B_1'$ 
and define 
$$b_{i}:=a_{ii}-\frac{1}{2}\big(\sum_{k=1}^{i-1}a_{ki}+\sum_{j=i+1}^n a_{ij}\big)
\quad \text{for each }i.$$
Then $f$ can be written as
\begin{align}
\label{f-in-ell1-norm}
\begin{split}
&    f(\x)
=\sum_{1\leq i< j \leq n}
\frac{a_{ij}}{2}(x_i^2+x_j^2)^2
+
\sum_{1\leq i \leq n} b_i x_i^4\\
&=
\underbrace{\sum_{\substack{1\leq i< j \leq n,\\[0.2em] a_{ij}\geq 0}}
\frac{a_{ij}}{2}(x_i^2+x_j^2)^2
+
\sum_{\substack{1\leq i \leq n,\\[0.2em] 
b_i\geq 0}} b_i x_i^4}_{f_1(\x)}
-\underbrace{\Big(\sum_{\substack{1\leq i< j \leq n,\\[0.2em] a_{ij}< 0}}
\frac{|a_{ij}|}{2}(x_i^2+x_j^2)^2
+
\sum_{\substack{1\leq i \leq n,\\[0.2em] b_i<0}}
|b_i| x_i^4\Big)}_{f_2(\x)}.
\end{split}
\end{align}
Note that $f_1(\x),f_2(\x)\in \CP_{\cQ}$,
$\|f_1(x)\|_1'\leq 3$, $\|f_2(x)\|_1'\leq 3$
and hence, 
$$B_1'\subseteq 3\big((CP_Q\cap B_1')-(CP_Q\cap B_1')\big).$$
This and Theorem \ref{RS} below, imply the left-hand side inequality in \eqref{vrad-ratio-ell-1},
while the right-hand side one is clear.

From \eqref{vrad-ratio-ell-1} we can conclude that in the unit ball of the $\ell^1$ norm
the sizes of all cones from \eqref{cones-of-forms-studied} 
follow the same asymptotics up to absolute constants, which lie on the interval 
$[\frac{1}{12},1]$.
However, it does not follow that when replacing $B_1'$ with a ball in another metric, e.g., $B_2$, the asymptotics of the sizes of the sections would still differ only up to absolute constants. Indeed, extending the definition of the $\ell^1$ norm to all forms in $\RR[\x]_4$ and by an analogous decomposition as in \eqref{f-in-ell1-norm} 
using the equalities
$$
x_{i}x_jx_kx_\ell
=
\frac{1}{2}(x_ix_j+x_{k}x_{\ell})^2-\frac{1}{2} x_i^2x_j^2-\frac{1}{2} x_k^2x_\ell^2,
$$
it follows that in the unit ball of the $\ell^1$ norm the sizes of the cone of positive quartics and the cone of sums of squares quartics follow the same asymptotics up to an absolute constant.
But this is in sharp contrast with the results of Blekherman \cite{Ble06} (see Remark \ref{subsec-comparison-Blek} above).
However, in \cite{Ble06}, the cones of forms are compared in the $L^2$ metric. 

Next we discuss how to obtain the volume estimates of the cones for the unit ball of the $L^2$ norm,
which is difficult to establish directly, as explained in the second paragraph above. 
To get around this issue it is natural to reduce the problem to estimating volumes of intersections of the cones with a hyperplane, having a chosen unit vector for its normal. There are two requirements for the proper choice of the hyperplane, i.e., the intersections with the cones must be compact and the hyperplane must intersect all cones ``fairly''. If the cones are in general position, not having any common line of symmetry, there might not exist such a hyperplane. However, in case the cones share a unique line of symmetry, it is natural to take the hyperplane with the normal being this line of symmetry (see Figure \ref{fig:intersection-hyperplanes}).
The cone of all forms $\RR[\x]_4$, the cone of positive forms and the cone of sums of squares forms are all easily seen to be invariant under the action of the orthogonal group $O(n)$ defined by rotating the coordinates:
\begin{equation}
    \label{action}
    O\cdot f(\x):=f(O^{-1}\x)\quad \text{for }O\in O(n).
\end{equation} Moreover, the only fixed points for this action are the polynomials $\alpha (x_1^2+\ldots+x_n^2)^2$, $\alpha\in \RR$. Hence, choosing the hyperplane with a normal $(x_1^2+\ldots+x_n^2)^2$
is the best choice to compare sizes of the cones.
The results in \cite{Ble06} also compare the cones with this choice.\looseness=-1

Referring to the previous paragraph, a natural question is, whether also the difference between our cones of even forms becomes visible, when the unit ball in the $\ell^1$ norm is replaced by the unit ball in the $L^2$ norm.  
Even forms are not invariant under the above action \eqref{action} of $O(n)$, but only under its subgroup $S(n)$ of all permutations of the coordinates. There are more fixed points under the action of $S(n)$, i.e., every point $\alpha \sum_i x_i^4+\beta \sum_{i<j}x_i^2x_j^2$, $\alpha,\beta\in \RR$ is a fixed point. 
However, $r(\x)=(x_1^2+\ldots+x_n^2)^2$ is still the most fair choice among them, since it is the only one that extends to the fair choice for larger cones in $\RR[\x]_4$. Choosing $r(\x)$ for the normal, we get precisely the hyperplane $\cL$ (see \eqref{hyperplane-forms}), since 
$1=\langle f,r\rangle=\int_{S^{n-1}} fr\;\dd\sigma=\int_{S^{n-1}} f\;\dd\sigma$.


\begin{center}
\begin{figure}[!h]

 \begin{tikzpicture}
\coordinate (O) at (0,0);

\fill[blue!40!white,opacity=.4] (-4,5) -- (O) -- (4,5);
\draw[ultra thick, blue!70!white,opacity=.6] (-4,5) -- (O) -- (4,5);
\fill[red!40!white,opacity=.4] (-2,5) -- (O) -- (2,5);
\draw[ultra thick, red!70!white,opacity=.6] (-2,5) -- (O) -- (2,5);

\draw[ultra thick, green!40!black, opacity=.6] (-4,3) -- (4,3);
\draw[ultra thick, orange!70!black, opacity=.6] (-4,5.2) -- (.6,-.2);

\draw[gray!70!black, ->] (-6,0) -- (6,0);
\draw[gray!70!black, ->] (0,0) -- (0,5.2);

\node[right=1cm, above=.7cm, color = blue!40!white] at (current bounding box.east) {
    {\rule{2em}{2pt}} \textcolor{black}{$\COP$}
};

\node[below=1cm, above=1cm, left=.3cm, color=red!40!white] at (current bounding box.east) {
    {\rule{2em}{2pt}} \textcolor{black}{$\Lf$}
};

\node[below=2cm, above=1cm, left=-0.84cm, color=green!70!black] at (current bounding box.east) {
    {\rule{2em}{2pt}} \textcolor{black}{fair plane}
};
\node[below=3cm, above=1cm, left=-.47cm, color=orange!70!black] at (current bounding box.east) {
    {\rule{2em}{2pt}} \textcolor{black}{unfair plane}
};

\end{tikzpicture}
  \label{figure2}\caption{Intersections of the cones with a fair and an unfair hyperplane.}
  \label{fig:intersection-hyperplanes}
\end{figure}
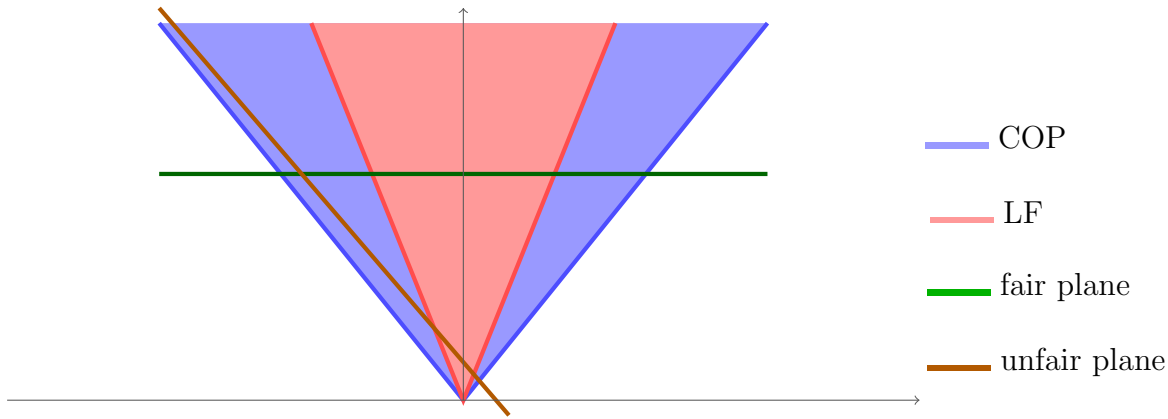
\end{center}

\begin{remark}[Discussion of the ratio of volumes versus the ratio of volume radii]
\label{volumes-vs-vrads}
Let $\pr_{n}:\RR^{n}\to \RR^{n-1}$ be the projection of $\RR^n$ to the first $n-1$ coordinates, i.e., 
$\pr_{n}(x_{1},\ldots,x_n)=(x_1,\ldots,x_{n-1})$.
Let $(K_n)_n$ and $(L_n)_n$ be two sequences of cones, 
where $K_n, L_n\subseteq \RR^{n}$ and 
$\pr_{n}(K_n)=K_{n-1}$, $\pr_{n}(L_n)=L_{n-1}$ for each $n$.
Let $(\cH_n)_n$ be a sequence of affine hyperplanes $\cH_n\subset \RR^n$, such that
the intersections $K_n':=K_n\cap \cH_n$ and $L_n':=L_n\cap \cH_n$ are compact.
When comparing asymptotic sizes of the cones $K_n$ and $L_n$, as $n\to \infty$, based on the sequences
$(K'_n)_n$ and $(L_n')_n$,
it is important that comparing $K_{n-1}'$ and $L_{n-1}'$ is equivalent to comparing
$\pr_{n}(K'_n)$ and $\pr_n(L'_{n})$ for each $n$.
Otherwise the choice of the compact section in each dimension could have a large impact on the size estimates (see also the discussion in \S\ref{choice-hyperplane} above). 
Let now $(c_n)_n$ be a sequence of positive constants, such that 
$\Vol L_n'=(c_n)^{n-1}\cdot\Vol K_n'$.
The constant $c_n$ is the dilation coefficient of $K_n'$ 
such that the sizes of $L_n'$ and $c_nK_n'$ are the same.
Studying the impact of the dimension on the difference of sizes of $K_n'$ and $L_n'$ is all contained in the sequence $(c_n)_n$. If $c_n\equiv c$ for each $n$, then for our purposes the dimension does not have any impact on the size difference. 
So computing 
the normalized volume
ratio $\Big(\frac{\Vol L_n'}{\Vol K_n'}\Big)^{1/(n-1)}=c_n$ is the relevant size difference information for us. Choosing a reference sequence $(B_n)_n$ of unit balls in every affine hyperplane $\cH_n$ (once the origin is fixed and the metric comes form the inner product on $\cH_n$) and comparing a sequence $(K_n)_n$ with $(B_n)_n$, leads to the notion of $\vrad(K_n)$. This is a sequence of dilation constants of the sequence $(B_n)_n$, such that the volumes of $c_nB_n$ and $K_n$ are the same for every $n$.

Note that comparing the sequences of volume radii of two sequences $(K'_n)_n$ and $(L'_n)_n$, where $K_n'\subseteq L_n'$ for each $n$, leads to the information on the dimension effect on the size ratio of the sequences. However, if we fix a level $n$ and consider the question of probability that a given point, obtained by random sampling according to the uniform distribution on the larger section $L_n'$, also lies in the smaller section $K_n'$, we should not neglect the effect of the dimension. 
But even for this problem all the information is stored in the sequence of volume radii ratios.
\end{remark}

\subsection{Membership in $\cM$, $\cL$ and the space of harmonic polynomials}
\label{prel-even-quartics}

Let 
    $$
    \Delta
    :=
        \frac{\partial^2}{\partial x_1^2}
        +\ldots+
        \frac{\partial^2}{\partial x_n^2}
    $$
be the Laplace operator.
A form $f\in \RR[\x]_{k}$ is called \textbf{harmonic} if $\Delta(f)=0$. 
We denote by $\cH_{4}$ the space of all harmonic forms of degree $4$.

The following lemma characterizes 
the membership of $f\in \cQ$ in $\cL$, $\cM$, $\cH_4$
in terms of its coefficients.

\begin{lemma}
   \label{090123-1550}
    Let 
        $\displaystyle f(\x)=\sum_{i,j=1}^n a_{ij}x_i^2x_j^2\in \cQ$.
    The following statements hold:
\begin{enumerate}[\rm(1)]
    \item 
        \label{prop-of-f-in-Q-part0}
            $
            \displaystyle f\in \cL
            \quad \Leftrightarrow\quad
            3\Big(\sum_{i=1}^n a_{ii}\Big)+\sum_{i\neq j} a_{ij}=n(n+2).
            $
        \medskip
    \item 
        \label{prop-of-f-in-Q-part1}
            $
            \displaystyle f\in \cM
            \quad \Leftrightarrow\quad
            3\Big(\sum_{i=1}^n a_{ii}\Big)+\sum_{i\neq j} a_{ij}=0.
            $
    \medskip
    \item
        \label{prop-of-f-in-Q-part2}
            $
            \displaystyle f\in \cH_4
            \quad \Leftrightarrow\quad
            a_{ii}=
            -\frac{1}{6}
            \sum_{
                \substack{j=1,\ldots,n,\\ j\neq i}
                }
                (a_{ij}+a_{ji})\quad \text{for }
            i=1,\ldots,n.
            $
    \medskip
    \item 
        \label{prop-of-f-in-Q-part3}
        $
        \cH_4\subseteq \cM.
        $
    \medskip
    \item 
        \label{prop-of-f-in-Q-part4}
        $
        \dim (\cH_4\cap \cQ)=\frac{n(n-1)}{2}.
        $
\end{enumerate}
\end{lemma}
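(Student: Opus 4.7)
The proof is entirely a matter of bookkeeping, so I would organize it by handling the parts in order. Parts (1) and (2) reduce to computing the spherical average of $f$, part (3) is a direct Laplacian calculation, and parts (4)--(5) are immediate algebraic consequences.

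First I would invoke the classical identities
$$\int_{S^{n-1}} x_i^4\, \dd\sigma = \frac{3}{n(n+2)}, \qquad \int_{S^{n-1}} x_i^2 x_j^2\, \dd\sigma = \frac{1}{n(n+2)} \quad (i \neq j),$$
which follow from the Gamma-function formula for monomial integrals on the sphere (and are already used implicitly in Remark \ref{ort-proj}). Integrating $f$ term by term and collecting gives
$$\int_{S^{n-1}} f\, \dd\sigma = \frac{1}{n(n+2)}\Bigl(3\sum_{i=1}^n a_{ii} + \sum_{i\neq j} a_{ij}\Bigr).$$
Setting this equal to $1$, respectively $0$, yields the equivalences in (1) and (2) at once.

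For (3), I would differentiate monomial by monomial: $\partial^2(x_i^2 x_j^2)/\partial x_k^2$ equals $12 x_k^2$ when $i=j=k$, equals $2 x_j^2$ when $i=k \neq j$, equals $2 x_i^2$ when $j=k \neq i$, and vanishes otherwise. Summing over $k$ and using the symmetry $a_{ij}=a_{ji}$ to merge the two off-diagonal contributions gives
$$\Delta f \;=\; 4 \sum_{j=1}^n x_j^2 \Bigl(3 a_{jj} + \sum_{k \neq j} a_{kj}\Bigr).$$
Since $\{x_1^2,\ldots,x_n^2\}$ is linearly independent, $\Delta f = 0$ iff each bracket vanishes, i.e.\ $a_{jj} = -\tfrac{1}{3}\sum_{k \neq j} a_{kj}$; rewriting the right-hand side as $-\tfrac{1}{6}\sum_{k \neq j}(a_{jk}+a_{kj})$ using $a_{jk}=a_{kj}$ recovers the form in (3).

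Part (4), naturally read as $\cH_4 \cap \cQ \subseteq \cM$ (since $\cM \subseteq \cQ$), then follows by plugging the diagonal formula from (3) into (2): $3\sum_i a_{ii} = -\sum_i \sum_{j\neq i} a_{ij} = -\sum_{i \neq j} a_{ij}$, so the average-zero criterion is satisfied. For (5), I observe that $\cQ$ is parametrized by the entries $\{a_{ij}\}_{i \leq j}$, whence $\dim\cQ = \binom{n+1}{2}$; the $n$ equations in (3) are linearly independent, since the $i$th equation uniquely determines $a_{ii}$ in terms of off-diagonal entries, so cutting $\cQ$ by these $n$ constraints yields $\dim(\cH_4 \cap \cQ) = \frac{n(n+1)}{2}-n = \frac{n(n-1)}{2}$. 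I expect no genuine obstacle; the only point demanding attention is the factor-of-two bookkeeping from $a_{ij}=a_{ji}$, which accounts for the apparent mismatch between the $\tfrac{1}{3}$ that arises naturally in the Laplacian computation and the $\tfrac{1}{6}$ appearing in the statement of (3).
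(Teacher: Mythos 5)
Your proposal is correct and follows essentially the same route as the paper: parts (1)--(2) via the spherical moment identities $\int_{S^{n-1}}x_i^4\,\dd\sigma=\tfrac{3}{n(n+2)}$ and $\int_{S^{n-1}}x_i^2x_j^2\,\dd\sigma=\tfrac{1}{n(n+2)}$, part (3) by a term-by-term Laplacian computation yielding the same linear conditions on the $a_{ii}$, and parts (4)--(5) as immediate consequences, with the dimension count $\dim\cQ-n=\tfrac{n(n-1)}{2}$. Your explicit remarks on the independence of the $n$ constraints and on the $\tfrac13$ versus $\tfrac16$ bookkeeping are correct and only make the argument slightly more careful than the paper's.
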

\begin{proof}
    	\ref{prop-of-f-in-Q-part0}
    and
     	\ref{prop-of-f-in-Q-part1}
    follow by using \cite[Lemma 8]{Bar02} to check that
    \begin{align}
	 \label{integral-of-x4}
        \int_{S^{n-1}} x_i^4\ \dd\sigma
            &=\frac{3}{n(n+2)}\quad \text{for }i=1,\ldots,n,\\
	\label{integral-of-x2y2}
        \int_{S^{n-1}} x_i^2x_j^2\ \dd\sigma
            &=\frac{1}{n(n+2)}\quad  \text{for }1\leq i\neq j\leq n.
    \end{align}

    \ref{prop-of-f-in-Q-part2} follows from the following computation:
    \begin{align*}
         \Delta\big(\sum_{i,j=1}^n a_{ij}x_i^2x_j^2\big)
         &=
            \sum_{i=1}^n (12 a_{ii}x_i^2)
            +
            \sum_{i\neq j}(2a_{ij}(x_i^2+x_j^2))\\
         &=
         \sum_{i=1}^n 
            \Big(
                x_i^2 
                \big(12a_{ii}+
                    \sum_{\substack{j=1,\ldots,n\\ j\neq i}}
                    2(a_{ij}+a_{ji})
                \big)
            \Big).
    \end{align*}
    Hence, 
    $$
        \Delta\big(\sum_{i,j=1}^n a_{ij}x_i^2x_j^2\big)=0
        \quad \Leftrightarrow \quad
        12a_{ii}+\sum_{\substack{j=1,\ldots,n\\ j\neq i}}
                    2(a_{ij}+a_{ji})=0
                    \quad \forall i=1,\ldots,n,
    $$
    which proves \ref{prop-of-f-in-Q-part2}.

    \ref{prop-of-f-in-Q-part3} follows easily from
    \ref{prop-of-f-in-Q-part2} 
    and 
    \ref{prop-of-f-in-Q-part1},
    while the following computation using \ref{prop-of-f-in-Q-part2},
    $$
    \dim(\cH_4\cap \cQ)=\dim \cQ-n=\frac{n(n+1)}{2}-n=\frac{n(n-1)}{2},
    $$
   proves \ref{prop-of-f-in-Q-part4}. 
\end{proof}

\subsection{The differential metric}
    \label{the-differential-metric}

In this subsection we follow \cite[Section 5]{Ble06}.
For a form 
    $$
    f(\x)
    =\sum_{1\leq i,j,k,\ell\leq n}
        a_{ijk\ell}x_ix_jx_kx_\ell
    \in \RR[\x]_4
    $$
the \textbf{differential operator} $D_f:\RR[\x]_4\to \RR$
is defined by
    $$
    D_f(g)
    =\sum_{1\leq i,j,k,\ell\leq n}
        a_{ijk\ell}
        \frac{\partial^4g}{\partial x_i\partial x_j\partial x_k\partial x_\ell}.
    $$
The \textbf{differential metric} on $\RR[\x]_4$ is given by
    $$\langle f,g\rangle_d=D_f(g).$$
For a point $v=(v_1,\ldots,v_n)\in S^{n-1}$, we denote by $v^4$
the fourth power of a linear form:
    $$
    v^4:=(v_1x_1+\ldots+v_nx_n)^4.
    $$
For us the following operator $T:\RR[\x]_4\to \RR[\x]_4$ will be important:
    $$
        (Tf)(\x)=\int_{S^{n-1}}f(v)v^4\dd\sigma(v).
    $$
Let
\begin{align*}
    {\cH_0}
    &:=
    \Big\{c\big(\sum_{i=1}^n x_i^2\big)^{2}\colon c\in \RR
    \Big\},\\
    {\cH_2}
    &:=
    \Big\{
    g\in \RR[\x]_4\colon    
        g=\big(\sum_{i=1}^n x_i^2\big)\cdot h
	\quad
        \text{for some harmonic form } h\in \RR[\x]_{2}
    \Big\}.
\end{align*}
It is well-known that every $f\in \RR[\x]_4$ can be uniquely written as a sum $f=f_0+f_1+f_2$, where $f_i\in {\cH_{2i}}$, $i=0,1,2$ \cite[Theorem 2.1]{Ble04}.
We denote by 
    $\ell_i:\RR[\x]_4\to \cH_{2i}$,
$i=0,1,2$,
the corresponding projection operators, i.e., 
$\ell_i(f)=f_i.$

We collect some properties of $T$ in the next lemma.

\begin{lemma}
    \label{T-properties}
    The operator $T$ satisfies:
    \begin{enumerate}[\rm (1)]
        \item\label{T-prop-pt4}  
            $\langle Tf,g\rangle_d=4!\langle f,g\rangle$ for every $f,g\in \RR[\x]_4$.
            \medskip
        \item\label{T-prop-pt1}
            $\cQ$ is an invariant subspace of $T$.
            \medskip
        \item\label{T-prop-pt2}
            $\displaystyle T\Big(\big(\sum_{i=1}^n x_i^2\big)^2\Big)=\frac{3}{n(n+2)}\big(\sum_{i=1}^n x_i^2\big)^2.$
            \medskip
        \item\label{T-prop-pt3}
		$
            \displaystyle \frac{n(n+2)}{3}T(f)=    
            \ell_0(f)+\frac{4}{n+4}\ell_1(f)+\frac{8}{(n+4)(n+6)}\ell_2(f)
            $
           for every $f\in \RR[\x]_4$.
         \medskip
         \item\label{T-prop-pt5}
            The restriction $T|_{\cQ}:\cQ\to\cQ$ of $T$ to $\cQ$ is bijective.
    \end{enumerate}
\end{lemma}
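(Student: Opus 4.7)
The plan is to prove (1) first via the identity $D_{v^4}(g)=4!\,g(v)$, then derive (2) and (3) by short direct calculations, establish (4)---the main obstacle---via representation theory, and deduce (5) as a one-line corollary of (2) and (4). For (1), the key observation is that for every $v\in\RR^n$ and $g\in\RR[\x]_4$,
\[
D_{v^4}(g)=4!\,g(v),
\]
which is an iterated form of Euler's identity for degree-$4$ homogeneous polynomials. Since the map $f\mapsto D_f(g)$ is linear, Fubini lets me swap the sphere integral with $D_{(\cdot)}(g)$:
\[
\langle Tf,g\rangle_d = D_{Tf}(g) = \int_{S^{n-1}} f(v)\,D_{v^4}(g)\,\dd\sigma(v) = 4!\int_{S^{n-1}} f(v)g(v)\,\dd\sigma(v) = 4!\,\langle f,g\rangle.
\]

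For (2), any $f\in\cQ$ is invariant under the sign changes $v_k\mapsto -v_k$ (depending only on the $v_i^2$), and $\sigma$ is sign-invariant; substituting $v_k\mapsto -v_k$ in the defining integral for $Tf$ yields $(Tf)(x_1,\ldots,-x_k,\ldots,x_n)=(Tf)(\x)$, so $Tf$ is even in each $x_k$, hence in $\cQ$. For (3), using $\sum_i v_i^2=1$ on $S^{n-1}$,
\[
T\big(\big(\textstyle\sum_i x_i^2\big)^{2}\big)(\x)=\int_{S^{n-1}}(v\cdot \x)^4\,\dd\sigma(v).
\]
Rotation-invariance of $\sigma$ and homogeneity of degree $4$ force this to equal $C\cdot(\sum_i x_i^2)^2$; evaluating at $\x=e_1$ and using \eqref{integral-of-x4} identifies $C=\tfrac{3}{n(n+2)}$.

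The main obstacle is (4). The operator $T$ is $O(n)$-equivariant because $\sigma$ is, and the harmonic decomposition $\RR[\x]_4=\cH_0\oplus\cH_2\oplus\cH_4$ writes $\RR[\x]_4$ as a direct sum of three pairwise non-isomorphic irreducible $O(n)$-modules. Schur's lemma then forces $T$ to act on each $\cH_{2i}$ as a scalar $\lambda_i$, with $\lambda_0=\tfrac{3}{n(n+2)}$ already identified in (3). To pin down $\lambda_1$ and $\lambda_2$ I would pick explicit eigenvectors, e.g.\ $(\sum_i x_i^2)(x_1^2-x_2^2)\in\cH_2$ and the harmonic quartic $x_1^4-6x_1^2x_2^2+x_2^4\in\cH_4$, and evaluate $T$ applied to each term using the moment formula from \cite[Lemma 8]{Bar02}; this produces $\lambda_1=\tfrac{12}{n(n+2)(n+4)}$ and $\lambda_2=\tfrac{24}{n(n+2)(n+4)(n+6)}$, matching \ref{T-prop-pt3}. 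A convenient shortcut is offered by (1): for any eigenvector $f_i\in\cH_{2i}$ of $T$ one has $\lambda_i=4!\,\|f_i\|^{2}/\|f_i\|_d^{2}$, reducing the scalar computation to an inner-product ratio. Finally, (5) is immediate: since each $\lambda_i\neq 0$, $T$ is bijective on all of $\RR[\x]_4$; by (2) the finite-dimensional subspace $\cQ$ is $T$-invariant, so $T|_{\cQ}$ is bijective as well.
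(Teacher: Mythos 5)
Your proposal is correct, and all the claimed eigenvalues check out (e.g.\ for $g=(\sum_i x_i^2)(x_1^2-x_2^2)$ one gets $\|g\|^2=\tfrac{4}{n(n+2)}$ and $\|g\|_d^2=8(n+4)$, giving $\lambda_1=\tfrac{12}{n(n+2)(n+4)}$ as you state). The main difference from the paper is one of self-containedness: the paper simply cites Blekherman for parts \ref{T-prop-pt4}, \ref{T-prop-pt2} and \ref{T-prop-pt3} (namely \cite[Lemma 5.1]{Ble06}, the first paragraph of the proof of \cite[Lemma 5.2]{Ble06}, and \cite[Lemma 7.4]{Ble04}), whereas you reprove them via $D_{v^4}(g)=4!\,g(v)$, rotation invariance, and Schur's lemma on the harmonic decomposition --- all standard and valid, and your shortcut $\lambda_i=4!\,\|f_i\|^2/\|f_i\|_d^2$ is a nice way to organize the eigenvalue computation (it exploits that $T$ is self-adjoint in both metrics, which also sidesteps the usual caveat about real Schur's lemma). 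For part \ref{T-prop-pt1} the paper expands $Tf$ and invokes the moment formula of \cite[Lemma 8]{Bar02} to see that only monomials in $\cQ$ survive; your argument via invariance of $f$ and $\sigma$ under the sign flips $v_k\mapsto -v_k$, forcing $Tf$ to be even in each variable, is a cleaner route to the same conclusion. Part \ref{T-prop-pt5} is argued identically in both.
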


\begin{proof}
    \ref{T-prop-pt4} is \cite[Lemma 5.1]{Ble06}.

    Next, we establish \ref{T-prop-pt1}.
    Let $f(\x)=\sum_{i,j=1}^n a_{ij}x_i^2x_j^2\in \cQ$.
    Then
    \begin{align*}
    (Tf)(\x)
        &=\int_{S^{n-1}}f(v)v^4\dd\sigma(v)
        =\int_{S^{n-1}}\left(\sum_{i,j=1}^na_{ij}v_i^2v_j^2\right)v^4\dd\sigma(v)\\
        &=\sum_{i,j=1}^n\sum_{k_1,k_2,k_3,k_4=1}^n
        \left(a_{ij}x_{k_1}x_{k_2}x_{k_3}x_{k_4}
        \int_{S^{n-1}}\left(v_i^2v_j^2v_{k_1}v_{k_2}v_{k_3}v_{k_4}\right)\dd\sigma(v)\right)
    \end{align*}
    By \cite[Lemma 8]{Bar02}, the integral of 
        $v_i^2v_j^2v_{k_1}v_{k_2}v_{k_3}v_{k_4}$
    over $S^{n-1}$ is nonzero iff all the exponents at the coordinates of $v$
    are even. Hence, either $k_1=k_2=k_3=k_4$ or the indices $k_i$ split into two equal pairs,
    i.e., $k_{i_1}=k_{i_2}$ and $k_{i_3}=k_{i_4}$, where $i_1,i_2,i_3,i_4$ is some permutation of 
    the indices $1,2,3,4$. In both cases $x_{k_1}x_{k_2}x_{k_3}x_{k_4}\in \cQ$ and hence 
    $Tf\in \cQ$.

    \ref{T-prop-pt2} is a special case (for $k=2$) of the first paragraph of the proof of \cite[Lemma 5.2]{Ble06} showing that 
    \begin{equation}
		\label{constant-c}
        T\Big(\big(\sum_{i=1}^n x_i^2\big)^2\Big)=c\big(\sum_{i=1}^n x_i^2\big)^2\quad  
    \text{for }
        c=\int_{S^{n-1}}x_1^4 \dd\sigma=\frac{3}{n(n+2)}.
    \end{equation}

    \ref{T-prop-pt3} follows by \cite[Lemma 7.4]{Ble04}, where it is shown that
    $$
        \frac{1}{c}T(f)=c_0\ell_0(f)+c_1\ell_1(f)+c_2\ell_2(f),
    $$
    with $c$ as in \eqref{constant-c} and
    $
    c_0=1,
    c_1=\frac{4}{n+4},
    c_2=\frac{8}{(n+4)(n+6)}.
    $

    It remains to prove \ref{T-prop-pt5}.
    Using the fact that
    $\RR[x]_4$ is a direct sum of the subspaces
    $\cH_0$, $\cH_2$ and $\cH_4$ \cite[Theorem 2.1]{Ble04}
    and \ref{T-prop-pt3}, 
	it follows that $T:\RR[x]_4\to \RR[x]_4$ is bijective.
	In particular, $T|_{\cQ}$ is injective and 
	since $T|_{\cQ}$ maps to the finite-dimensional $\cQ$ by \ref{T-prop-pt1},
	\ref{T-prop-pt5} follows.
\end{proof}

Let $\cL$ and $\cM$ be as in \eqref{hyperplane-forms} and \eqref{M-def}, respectively.
Given a full-dimensional cone $L\subseteq \cQ$
such that 
        $\big(\sum_{i=1}^n x_i^2\big)^2$
is in the interior of $L$ and $\int_{S^{n-1}} f \dd\sigma>0$ for every nonzero $f\in L$,
we define the sets 	
$$
	L'=L\cap \cL
	\qquad\text{and}\qquad
	\widetilde{L}
	=
	\left\{
	f\in \cM\colon f+\big(\sum_{i=1}^n x_i^2\big)^2\in L
	\right\}.
$$
Let $L^\ast$ and $L^{\ast}_d$ be the duals of $L$ in the $L^2$ metric and the differential metric,
respectively:
\begin{align*}
       L^\ast
	&=
        \left\{
            f\in \cQ\colon      
                \langle f,g\rangle\geq 0\quad\forall g\in L
        \right\},\\
       L^\ast_d
	&=
        \left\{
            f\in \cQ\colon      
                \langle f,g\rangle_d\geq 0\quad\forall g\in L
        \right\}.
\end{align*}
The following lemma is an analog of \cite[Lemma 5.2]{Ble06} for $\cQ$.

\begin{lemma}
    \label{vrad-int-vs-d-metric}
    Let $L$ be a full-dimensional cone in $\cQ$ such that 
        $\big(\sum_{i=1}^n x_i^2\big)^2$
    is in the interior of $L$ and $\int_{S^{n-1}} f \dd\sigma>0$ for every nonzero $f\in L$.
    Then:
    $$
        \frac{8}{(n+4)(n+6)}
        \leq 
        \Big(
            \frac
                {\Vol\widetilde{L^{\ast}_d}}
                {\Vol\widetilde{L^{\ast}}}
        \Big)^{1/\dim\cM}
        =
            \frac
                {\vrad\widetilde{L^{\ast}_d}}
                {\vrad\widetilde{L^{\ast}}}
        \leq
        \Big(\frac{8}{(n+4)(n+6)}\Big)^{1-\frac{2n-1}{n^2+n-1}}.
    $$
\end{lemma}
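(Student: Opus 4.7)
The plan is to exploit the operator $T:\cQ\to\cQ$ of Lemma~\ref{T-properties}: after rescaling it yields a linear bijection of $\cM$ that sends $\widetilde{L^{\ast}}$ onto $\widetilde{L^{\ast}_d}$, so the ratio of the two volumes equals the determinant of this map, which we read off from the known eigenvalues. The two numerical bounds then follow from elementary inequalities applied to this closed form.

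First I would use the intertwining identity $\langle Tf,g\rangle_d = 4!\langle f,g\rangle$ (Lemma~\ref{T-properties}\ref{T-prop-pt4}) together with the bijectivity of $T|_\cQ$ (Lemma~\ref{T-properties}\ref{T-prop-pt5}) to conclude $T(L^{\ast}) = L^{\ast}_d$. Then I would rescale, setting
$$
\widehat T := \tfrac{n(n+2)}{3}\,T.
$$
By Lemma~\ref{T-properties}\ref{T-prop-pt3}, $\widehat T$ acts as the identity on the 1-dimensional subspace $\cH_0$ containing $(x_1^2+\cdots+x_n^2)^2$, as multiplication by $\mu_1:=\tfrac{4}{n+4}$ on $\cH_2$, and as multiplication by $\mu_2:=\tfrac{8}{(n+4)(n+6)}$ on $\cH_4$. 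Consequently $\widehat T$ fixes $(x_1^2+\cdots+x_n^2)^2$ and preserves $\cM=(\cH_2\cap\cQ)\oplus(\cH_4\cap\cQ)$, so it restricts to an affine bijection of $\cL$ sending $L^{\ast}\cap\cL$ onto $L^{\ast}_d\cap\cL$, and equivalently to a linear bijection of $\cM$ sending $\widetilde{L^{\ast}}$ onto $\widetilde{L^{\ast}_d}$.

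Next I would compute $|\det(\widehat T|_\cM)|$ in an $L^2$-orthonormal basis of $\cM$. By the standard $L^2$-orthogonality of $\cH_2$ and $\cH_4$ in $\RR[\x]_4$, the subspaces $\cH_2\cap\cQ$ and $\cH_4\cap\cQ$ are orthogonal; on these $\widehat T$ acts diagonally as $\mu_1$ and $\mu_2$. From Lemma~\ref{090123-1550}\ref{prop-of-f-in-Q-part4} we have $\dim(\cH_4\cap\cQ)=\tfrac{n(n-1)}{2}$, and since $\dim\cM=\tfrac{n(n+1)}{2}-1=\tfrac{(n-1)(n+2)}{2}$ this forces $\dim(\cH_2\cap\cQ)=n-1$. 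Therefore
$$
\frac{\Vol(\widetilde{L^{\ast}_d})}{\Vol(\widetilde{L^{\ast}})}
= \mu_1^{n-1}\mu_2^{n(n-1)/2}.
$$
Raising to the $1/\dim\cM$ power, the exponents collapse to $\tfrac{2}{n+2}$ and $\tfrac{n}{n+2}$, giving the middle equality of the lemma together with
$$
\left(\frac{\Vol(\widetilde{L^{\ast}_d})}{\Vol(\widetilde{L^{\ast}})}\right)^{1/\dim\cM}
= \mu_1^{2/(n+2)}\,\mu_2^{n/(n+2)}.
$$

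Finally I would derive the two bounds from this closed form. The lower bound uses $\mu_1\geq\mu_2>0$ (since $\mu_1/\mu_2=(n+6)/2\geq 1$): $\mu_1^{2/(n+2)}\mu_2^{n/(n+2)}\geq\mu_2^{2/(n+2)+n/(n+2)}=\mu_2$. For the upper bound, $\mu_1\leq 1$ gives $\mu_1^{2/(n+2)}\leq 1$, so the quantity is at most $\mu_2^{n/(n+2)}$; the elementary inequality $\tfrac{n}{n+2}\geq 1-\tfrac{2n-1}{n^2+n-1}$ (equivalent after clearing to $(n-1)(n+2)\leq n^2+n-1$), combined with $\mu_2\in(0,1)$, upgrades this to $\mu_2^{1-(2n-1)/(n^2+n-1)}$. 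The main conceptual input is Lemma~\ref{T-properties}, which both provides $T$ and pins down its eigenvalues; once one recognizes that the scaling $\tfrac{n(n+2)}{3}$ is precisely the one making $\widehat T$ fix $(x_1^2+\cdots+x_n^2)^2$ (and hence descend to an affine map of $\cL$), the remainder is a diagonal determinant plus routine arithmetic.
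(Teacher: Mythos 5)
Your proof is correct and follows essentially the same route as the paper: use the intertwining identity $\langle Tf,g\rangle_d=4!\langle f,g\rangle$ to get $T(L^\ast)=L^\ast_d$, rescale so that $(\sum_i x_i^2)^2$ is fixed and $\cM$ is preserved, and read the volume ratio off the eigenvalues $\mu_1,\mu_2$ on $\cH_2\cap\cQ$ and $\cH_4\cap\cQ$. The only (harmless) difference is that you compute the exact determinant $\mu_1^{n-1}\mu_2^{n(n-1)/2}$ and deduce both bounds from the closed form $\mu_1^{2/(n+2)}\mu_2^{n/(n+2)}$, whereas the paper bounds directly by the extreme contraction coefficients; your sharper exponent $n/(n+2)$ correctly implies the slightly weaker exponent $1-\tfrac{2n-1}{n^2+n-1}$ stated in the lemma.
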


\begin{proof}
    We follow the proof of \cite[Lemma 5.2]{Ble06}.
    By \ref{T-prop-pt4} of Lemma \ref{T-properties},
    for every $f,g\in \cQ$ we have
        $$
            \langle f,g\rangle \geq 0 
            \quad \Leftrightarrow \quad
            \langle Tf,g\rangle_d \geq 0.
        $$
    Since by 
	\ref{T-prop-pt5} of Lemma \ref{T-properties}, $T$ maps $\cQ$ bijectively to $\cQ$, it follows that 
    \begin{equation}
        \label{T-on-sections}
            T(L^\ast)=L^{\ast}_d.
    \end{equation}
    Observe that 
        $\cM=\widetilde{\cH_2}\oplus\widetilde{\cH_4},$
    where $\widetilde{\cH_{2i}}=\cH_{2i}\cap \cM$, $i=1,2$.
    Since $\cH_2$, $\cH_4$ are invariant subspaces of $\frac{n(n+2)}{3}T$ by 
    \ref{T-prop-pt3} of Lemma \ref{T-properties},
    it follows that $\cM$ is also an invariant subspace of $\frac{n(n+2)}{3}T$,
    which in addition also fixes $\big(\sum_{i=1}^n x_i^2\big)^2$.
    This, together with \eqref{T-on-sections}, implies that 
    \begin{equation}
        \label{T-on-sections-2}
            \Big(\frac{n(n+2)}{3}T\Big)(\widetilde{L^\ast})=\widetilde{L^{\ast}_d}.
    \end{equation}
    Since by \ref{T-prop-pt3} of Lemma \ref{T-properties}, the operator 
    $\frac{n(n+2)}{3}T$ acts as a contraction on the subspaces $\widetilde{\cH_{2i}}$, $i=1,2$,
    with the smallest contraction coefficient 
        $\frac{8}{(n+4)(n+6)}$,
    this establishes the lower bound in the statement of the lemma.
    To get the upper bound observe that the largest contraction occurs in $\widetilde{\cH_4}$
    where
        $\dim\widetilde{\cH_4}=\frac{n(n-1)}{2}$  
    by \ref{prop-of-f-in-Q-part4} of 
    Lemma \ref{090123-1550}.
    Since 
        $$
            \frac
                {\dim\widetilde{\cH_4}}{\dim\cM}
            =\frac
                {\frac{n(n-1)}{2}}
                {\frac{n(n+1)-1}{2}}
            =\frac{n(n-1)}{n(n+1)-1}=1-\frac{2n-1}{n^2+n-1},
        $$
    this establishes the upper bound of the lemma.
\end{proof}

\subsection{Blaschke-Santal\'o inequality and its reverse}
\label{BSantalo}

Let $K$ be a bounded convex set in $\RR^n$ with origin in its interior and $\langle\cdot,\cdot\rangle$
the inner product on $\RR^n$. 
The \textbf{polar} $K^\circ$ of $K$ is defined by
\begin{equation*}
	K^{\circ}
	=
	\left\{
		y\in \RR^n\colon \langle x,y\rangle\leq 1\quad\forall x\in K
	\right\}.
\end{equation*}
For $z\in \RR^n$ let
\begin{equation*}
	K^{z}
	=
	\left\{
		y\in \RR^n\colon \langle x-z,y-z\rangle\leq 1\quad\forall x\in K
	\right\},
\end{equation*}
i.e., $z$ is translated to the origin.
We denote by $p(K)$ the \textbf{volume product}  of $K$:
\begin{equation}
	\label{volumic-product}
	p(K)
	=
	\inf\{
		\Vol(K)\Vol(K^z)\colon z\text{ is an interior point of } K
	\}.
\end{equation}
It is clear that $p(K)$ is affine invariant, i.e., for every affine linear {invertible} transformation $T:\RR^m\to\RR^m$
it holds that 
	$p(K)=p(TK)$.
It turns out that there is a unique point $z$ 
\begin{color}{black} \cite[p.\ 85]{MP90} \end{color}, where the infimum in \eqref{volumic-product} is attained. 
This point is called the \textbf{Santal\'o point} of $K$ and we denote it by $s(K)$.
The following upper bound holds for the volume product.

\begin{theorem}
    [{Blaschke-Santal\'o inequality,  \cite[p.\ 90]{MP90}}]
    \label{BS}
	For a bounded convex set $K$ in $\RR^n$ with a non-empty interior it holds that
	\begin{equation*}
	\label{BSr-original}
		\Vol(K)\Vol(K^{s(K)})\leq (\Vol(B))^2,
	\end{equation*}
    where $B$ is the unit ball w.r.t.\ the inner product $\langle\cdot,\cdot\rangle$ on $\RR^n$.
\end{theorem}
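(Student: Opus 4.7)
The plan is to establish the inequality via Steiner symmetrization, which is the classical route to Blaschke--Santal\'o. First I would translate coordinates so that $s(K)=0$, reducing the goal to
$$\Vol(K)\Vol(K^\circ)\leq \Vol(B)^2.$$
By the defining property of the Santal\'o point, when $s(K)=0$ the centroid of $K^\circ$ coincides with the origin, a fact that will be used to keep track of Santal\'o points under symmetrization.

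Next I would introduce the Steiner symmetrization $S_H(K)$ with respect to a hyperplane $H\ni 0$: for each line $\ell\perp H$, replace $K\cap \ell$ by the closed interval in $\ell$, centered on $H$, of the same length. Fubini immediately yields $\Vol(S_H K)=\Vol(K)$. The crucial step is the pointwise/slicewise inequality
$$\Vol\bigl((S_HK)^\circ\bigr)\;\geq\;\Vol(K^\circ),$$
proved by choosing coordinates $(x',t)$ adapted to $H$, slicing $K^\circ$ by lines parallel to the $t$-axis, and using a one-dimensional convexity estimate together with Fubini to compare the length functions of these slices against those of $(S_HK)^\circ$. The subtle point, handled by a translation at each step, is to ensure that the Santal\'o point of the symmetrized body stays at the origin so that the polar really is the minimizing polar.

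Finally, I would appeal to Groemer's convergence theorem: there is a sequence of iterated Steiner symmetrizations of $K$ converging, in the Hausdorff metric, to a Euclidean ball $B_r$ of the same volume as $K$. Since $(B_r)^\circ=B_{1/r}$, the volume product $\Vol(B_r)\Vol((B_r)^\circ)=\Vol(B)^2$ follows by a direct scaling computation. Combining this with the monotonicity established above gives
$$\Vol(K)\Vol(K^\circ)\;\leq\;\Vol(B_r)\Vol((B_r)^\circ)\;=\;\Vol(B)^2.$$

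The hardest step is the monotonicity of the volume product under Steiner symmetrization: verifying the one-dimensional comparison between slices of $K^\circ$ and of $(S_HK)^\circ$ requires a careful Fubini argument and precise bookkeeping of the Santal\'o points through the iteration. Once this is in hand, the convergence to the ball and the equality case for balls are routine, and no further real-algebraic input is needed.
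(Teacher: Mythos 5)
The paper does not prove this theorem; it is stated as a classical result with a citation to Meyer--Pajor \cite{MP90}, and your Steiner-symmetrization outline (volume preservation, monotonicity of the polar volume under symmetrization, then convergence of iterated symmetrizations to a ball) is precisely the argument of that reference, including the correct characterization of the Santal\'o point as the point whose polar has centroid at the origin. The plan is sound and matches the cited source; the only caveat is that the central slicewise comparison $\Vol\bigl((S_HK)^\circ\bigr)\geq\Vol(K^\circ)$ is asserted rather than carried out, and that step is where essentially all of the work in \cite{MP90} lies.
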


There is also a lower bound for the volume product.

\begin{theorem}
    [{Reverse Blaschke-Santal\'o inequality}]
    \label{revBS}
	For a bounded convex set $K$ in $\RR^n$ with the origin in its interior, it holds that
	\begin{equation}
	\label{BSr-reverse}
		4^{-n}\pi n(\Vol(B))^2< \Vol(K)\Vol(K^\circ),
	\end{equation}
    where 
    $B$ is the unit ball w.r.t.\ the inner product $\langle\cdot,\cdot\rangle$ on $\RR^n$.
\end{theorem}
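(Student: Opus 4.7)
My plan is to derive this as a quantitative form of the Mahler--Bourgain--Milman lower bound on the volume product. The key difficulty, compared with the symmetric version of Mahler's conjecture, is that the polar $K^\circ$ in the statement is taken with respect to the origin rather than the optimal Santal\'o point $s(K)$, so the inequality must tolerate an arbitrary origin placement (subject only to the origin lying in the interior of $K$).

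First I would reduce to the centrally symmetric case by replacing $K$ with the symmetrization $K \cap (-K)$. This body is contained in $K$, hence $\Vol(K \cap (-K)) \leq \Vol(K)$. By the standard polar duality for intersections,
\[
(K \cap (-K))^\circ \;=\; \overline{\conv}(K^\circ \cup (-K^\circ)) \;\subseteq\; K^\circ - K^\circ,
\]
where the inclusion holds because $0 \in K^\circ$. The Rogers--Shephard inequality then controls the difference body by
\[
\Vol(K^\circ - K^\circ) \;\leq\; \binom{2n}{n}\Vol(K^\circ) \;\leq\; 4^n\,\Vol(K^\circ),
\]
and multiplying the two estimates yields $\Vol(K \cap (-K))\cdot\Vol((K \cap (-K))^\circ) \leq 4^n \Vol(K)\Vol(K^\circ)$. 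Second, I would apply the Bourgain--Milman theorem (or Kuperberg's refinement) to the centrally symmetric body $K \cap (-K)$, which provides an exponential lower bound of the form $c^n\Vol(B)^2$ on its volume product. Combining, $\Vol(K)\Vol(K^\circ) \geq (c/4)^n \Vol(B)^2$, from which the stated form follows by absorbing the additional polynomial factor $\pi n$ --- which can be traced to the gap between the origin-polar and the Santal\'o-point-polar via Theorem \ref{BS}.

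The main obstacle will be extracting the specific constant $4^{-n}\pi n$ in the stated form rather than a non-explicit $c^n$. The original Bourgain--Milman argument relies on Milman's $M$-ellipsoid and the quotient-of-subspace theorem and does not give sharp explicit constants easily; Kuperberg's geometric proof (using a natural symplectic volume form on a flag manifold) is explicit but requires its own machinery. A more hands-on alternative would be to write $\Vol(K) = \Vol(B)\int_{S^{n-1}}\rho_K(u)^n\,d\sigma(u)$ and similarly for $K^\circ$, then invoke Jensen's inequality together with the pointwise bound $\rho_K(u)\,h_K(u) \geq 1$; but this route alone produces only a weak factor and must be combined with the symmetrization step above. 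Since this is a classical result (cited in the excerpt along with the adjacent \cite{MP90}), I would expect the in-paper justification to be a short appeal to the convex geometry literature rather than a self-contained derivation.
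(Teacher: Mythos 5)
There is a genuine quantitative gap: the symmetrization route you sketch provably cannot reach the constant $4^{-n}\pi n$. Your chain gives $\Vol(K)\Vol(K^\circ)\geq 4^{-n}\,\Vol(K\cap(-K))\,\Vol\bigl((K\cap(-K))^\circ\bigr)$, so to conclude you would need the centrally symmetric body $K\cap(-K)$ to satisfy a volume-product lower bound of at least $\pi n\,(\Vol(B))^2$. But by the Blaschke--Santal\'o inequality the volume product of a symmetric body is at most $(\Vol(B))^2$, so no bound of the form $c^n(\Vol(B))^2$ with $c\le 1$ (and certainly not the Bourgain--Milman or Kuperberg symmetric constants, which have $c<1$) can close that factor; the phrase ``absorbing the additional polynomial factor $\pi n$'' hides an impossibility rather than a routine step. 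The individual ingredients you use (polarity of intersections, Rogers--Shephard, Bourgain--Milman) are applied correctly, but the composition loses a full $4^n$ that the target inequality has no room for.

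The paper instead quotes Kuperberg's Corollary 1.8 directly in the non-symmetric, origin-in-the-interior form, $\Vol(K)\Vol(K^\circ)\ge 4^n\binom{2n}{n}^{-2}(\Vol(B))^2$, so no symmetrization is needed. The remaining work is purely elementary: one shows $\binom{2n}{n}<4^n/\sqrt{\pi n}$ using $\binom{2n}{n}=\frac{4^n}{\pi}\int_{-\pi/2}^{\pi/2}\cos^{2n}x\,\dd x$ together with $\cos x\le e^{-x^2/2}$, whence $4^n\binom{2n}{n}^{-2}>4^{-n}\pi n$. Your instinct to appeal to the convex geometry literature is right in spirit, but the specific constant in the statement is not off-the-shelf: you need the non-symmetric version of Kuperberg's bound (not the symmetric one plus your own symmetrization) together with this explicit central-binomial estimate.
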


{
Bourgain and Milman \cite[Corollary 6.1]{BM87} established a weaker version of Theorem \ref{revBS} with $4^{-n}\pi n$ replaced by $c^n$ for an absolute constant
 $c>0$  independent of the dimension $n$ and the set $K$.
Later, Kupperberg \cite[Corollary 1.8]{Kup08} proved that 
	$\Vol(K)\Vol(K^\circ)\geq 4^n\big(\frac{(n!)^2}{(2n!)}\big)^{2}(\Vol(B))^2$.
Since
	$4^n\big(\frac{(n!)^2}{(2n!)}\big)^{2}= \Big(2^n\big(\frac{(n!)^2}{(2n!)}\big)\Big)^2\geq 2^{-2n}=4^{-n}$,
 one immediately gets $4^{n}(\Vol(B))^2$ as a lower bound on $\Vol(K)\Vol(K^\circ)$. To obtain the lower bound from Theorem \ref{revBS}, we use the following asymptotically sharp estimate:
$$
4^n\big(\frac{(n!)^2}{(2n!)}\big)^{2}=4^n\big(\binom{2n}{n}\big)^{-2}> \big(2^{-n} \sqrt{\pi n}\big)^2 = 4^{-n} \pi n.
$$
Indeed, following an idea from \cite{UL-mo} one can estimate the central binomial coefficient as follows:
\begin{align}
\label{binom-coeff}
\begin{split}
  \binom{2n}{n} &= \frac{4^n}{\pi} \int_{-\pi/2}^{\pi/2} \cos^{2n} x\, dx \leq   
  \frac{4^n}{\pi} \int_{-\pi/2}^{\pi/2} e^{-nx^2} x\, dx \\
  &< \frac{4^n}{\pi} \int_{-\infty}^{\infty} e^{-nx^2} x\, dx = \frac{4^n}{\sqrt{\pi n}}.
\end{split}
\end{align}
The first equality can be obtained either by induction and integration by parts or by writing $\cos x = (e^{\mathfrak{i}x} + e^{-\mathfrak{i}x})/2,$ expanding $\cos^{2n}x$ by the binomial theorem and directly computing the integrals involving the exponential function.
For the first inequality we estimate $\cos x \leq e^{-x^2/2}$ on $|x|\leq \pi/2$ by noting that
$$
\log \cos x + \frac{x^2}{2} \leq 0
$$
for $|x|< \pi/2$. Indeed, the function $\log \cos x + \frac{x^2}{2}$ is even and vanishes at $x=0,$ but it is also concave since its second derivative $1-1/\cos^2x$ is negative on $0<|x| < \pi/2.$ Asymptotical sharpness of the upper bound in \eqref{binom-coeff} follows  by Stirling's formula.}\\

The following proposition establishes a connection between 
	the polar of the section $\widetilde{L}$ of a full-dimensional cone $L$ in $\cQ$
and 
	the section $\widetilde{L^\ast}$ of its dual in the $L^2$ metric.

\begin{proposition}
    \label{polar-dual}
	Let $L$ be a full-dimensional cone in $\cQ$ such that 
        $\big(\sum_{i=1}^n x_i^2\big)^2$
    is in the interior of $L$, and assume $\int_{S^{n-1}} f \dd\sigma>0$ for every nonzero $f\in L$.
    Then
        $$(\widetilde L)^\circ=-\widetilde{L^\ast}.$$
\end{proposition}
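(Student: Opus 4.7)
The plan is to unwrap the polar directly using a single key observation: the form $u:=\big(\sum_{i=1}^n x_i^2\big)^2$ equals $1$ identically on $S^{n-1}$, so for every $f\in\RR[\x]_4$ one has $\langle f,u\rangle=\int_{S^{n-1}} f\,\dd\sigma$. In particular, $f\in\cM$ iff $\langle f,u\rangle=0$ and $h\in\cL$ iff $\langle h,u\rangle=1$. This reinterprets the affine normalization defining $\cL$ and $\cM$ as an $L^2$-orthogonality condition against $u$, which is exactly what makes polar duality translate cleanly into cone duality.

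First, I would rewrite $\widetilde L=\{h-u:h\in L'\}$ with $L'=L\cap\cL$. For $g\in\cM$, the defining condition $g\in(\widetilde L)^\circ$ reads $\langle h-u,g\rangle\leq 1$ for all $h\in L'$. Since $\langle u,g\rangle=0$ for $g\in\cM$, this collapses to the inequality $\langle h,g\rangle\leq 1$ for all $h\in L'$.

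Second, I would use the hypothesis $\int_{S^{n-1}} h\,\dd\sigma>0$ for every nonzero $h\in L$, i.e., $\langle h,u\rangle>0$, to rescale: every nonzero $h\in L$ yields $h/\langle h,u\rangle\in L'$. Hence $\langle h,g\rangle\leq 1$ for all $h\in L'$ is equivalent to $\langle h,g\rangle\leq\langle h,u\rangle$ for all $h\in L$, which rearranges to $\langle h,u-g\rangle\geq 0$ for all $h\in L$. Since $u-g\in\cQ$, this is precisely the statement $u-g\in L^\ast$. Combined with $g\in\cM$, and hence $-g\in\cM$, we read off $-g\in\widetilde{L^\ast}$, i.e., $g\in-\widetilde{L^\ast}$. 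Reversing each equivalence gives the converse inclusion.

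There is no genuine obstacle; the argument is a direct chain of equivalences. The only subtle point worth highlighting is the scaling step, which requires the positivity assumption $\int_{S^{n-1}} h\,\dd\sigma>0$ on $L\setminus\{0\}$ to translate between the compact base $L'$ and the full cone $L$; without it, one cannot pass from the polar condition on $\widetilde L$ to a dual-cone condition on all of $L$. All the cones in $\cC$ satisfy this hypothesis (they contain $u$ in their interiors), so the proposition applies to every $K\in\cC$ as needed for the volume estimates.
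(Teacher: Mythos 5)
Your proof is correct and follows essentially the same route as the paper's: both arguments unwind the polar via the identity $\langle f,(\sum_{i=1}^n x_i^2)^2\rangle=\int_{S^{n-1}}f\,\dd\sigma$, use the positivity hypothesis to pass between the cone $L$ and its base $L'$ by rescaling, and reduce the polar condition to the dual-cone condition; the paper simply runs the same chain of equivalences starting from $\widetilde{L^\ast}$ instead of from $(\widetilde L)^\circ$.
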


\begin{proof}
    We have that:
    \begin{align*}
            \widetilde{L^\ast}
            &=
            \left\{
                f\in \cM\colon
                f+(\sum_{i=1}^nx_i^2)^2\in L^\ast
            \right\}\\
            &=
            \left\{
                f\in \cM\colon
                \langle
                f+(\sum_{i=1}^nx_i^2)^2,
                h
                \rangle\geq 0 \quad
                \forall h\in L
            \right\}\\
            &=
            \left\{
                f\in \cM\colon
                \langle
                f+(\sum_{i=1}^nx_i^2)^2,
                h
                \rangle\geq 0 \quad
                \forall h\in L'
            \right\}\\
            &=
            \left\{
                f\in \cM\colon
                \langle
                f,
                h
                \rangle\geq -1 \quad
                \forall h\in L'
            \right\}\\
            &=
            \left\{
                f\in \cM\colon
                \langle
                -f,
                h
                \rangle\leq 1 \quad
                \forall h\in L'
            \right\}\\
            &=
            \left\{
                f\in \cM\colon
                \langle
                -f,
                g+(\sum_{i=1}^nx_i^2)^2
                \rangle\leq 1 \quad
                \forall g\in \widetilde L
            \right\}\\
            &=
            \left\{
                f\in \cM\colon
                \langle
                -f,
                g
                \rangle\leq 1 \quad
                \forall g\in \widetilde L
            \right\}\\
		&=
		-(\widetilde L)^\circ,
    \end{align*}
    where in the third equality we used 
    the homogeneity of the inner product,
    in the fourth the equality 
    $\big\langle (\sum_{i=1}^nx_i^2)^2,h\big\rangle=1$ 
    for every $h\in L'$
    and in the seventh the equality
    $\big\langle f,(\sum_{i=1}^nx_i^2)^2\big\rangle=0$ 
    for every $f\in \cM$.
    This concludes the proof of the proposition.
\end{proof}

Using Theorems \ref{BS}, \ref{revBS}
together with Lemma \ref{vrad-int-vs-d-metric} and Proposition \ref{polar-dual},
we obtain the following versions of the Blaschke-Santal\'o inequality and its reverse.

\begin{corollary}
    \label{BSr-in-d-metric}
    Let $L$ be a full-dimensional cone in $\cQ$ such that 
        $\big(\sum_{i=1}^n x_i^2\big)^2$
    is in the interior of $L$,
    and assume $\int_{S^{n-1}} f \dd\sigma>0$ for every nonzero $f\in L$.
    Then
    \begin{equation}
	\label{revBS-dif-metric}
	\frac{2}{(n+4)(n+6)}
        \leq 
        \vrad(\widetilde L)\vrad(\widetilde{L^{\ast}_d}).
    \end{equation}

    Moreover, if  $\big(\sum_{i=1}^n x_i^2\big)^2$
    is in addition the Santal\' o point of $L$, then
    \begin{equation}
	\label{BS-dif-metric}
        \vrad(\widetilde L)\vrad(\widetilde{L^{\ast}_d})
        \leq
        \Big(\frac{8}{(n+4)(n+6)}\Big)^{1-\frac{2n-1}{n^2+n-1}},
    \end{equation}
\end{corollary}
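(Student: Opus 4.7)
The plan is to chain together three ingredients already established: Proposition \ref{polar-dual}, which identifies the polar of $\widetilde L$ with (a reflection of) the $L^2$-dual section; Lemma \ref{vrad-int-vs-d-metric}, which compares the volume radii of the $L^2$-dual section $\widetilde{L^\ast}$ and the differential-metric dual section $\widetilde{L^\ast_d}$; and the classical Blaschke-Santal\'o bounds (Theorems \ref{BS} and \ref{revBS}) applied to the pair $\widetilde L$ and $(\widetilde L)^\circ$ in the ambient Hilbert space $\cM$. Since volume is invariant under the antipodal map, the equality $(\widetilde L)^\circ = -\widetilde{L^\ast}$ from Proposition \ref{polar-dual} gives $\vrad((\widetilde L)^\circ) = \vrad(\widetilde{L^\ast})$, which is what lets the Blaschke-Santal\'o product feed directly into the lemma.

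For the lower bound \eqref{revBS-dif-metric}, the origin lies in the interior of $\widetilde L \subseteq \cM$ because $(\sum_i x_i^2)^2$ lies in the interior of $L$ by hypothesis. Setting $N := \dim \cM = \tfrac{n(n+1)}{2}-1$ and applying Theorem \ref{revBS} yields
\[
\vrad(\widetilde L)\,\vrad(\widetilde{L^\ast}) \;=\; \vrad(\widetilde L)\,\vrad((\widetilde L)^\circ) \;>\; (4^{-N}\pi N)^{1/N} \;=\; \frac{(\pi N)^{1/N}}{4} \;\geq\; \frac{1}{4},
\]
where the final step uses $\pi N \geq 1$ for $N\geq 1$. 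Combining with the lower estimate from Lemma \ref{vrad-int-vs-d-metric} gives
\[
\vrad(\widetilde L)\,\vrad(\widetilde{L^\ast_d}) \;\geq\; \frac{8}{(n+4)(n+6)}\,\vrad(\widetilde L)\,\vrad(\widetilde{L^\ast}) \;>\; \frac{2}{(n+4)(n+6)},
\]
as desired.

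For the upper bound \eqref{BS-dif-metric}, the added Santal\'o-point hypothesis on $L$ translates, after subtracting $(\sum_i x_i^2)^2$, into the statement that $0$ is the Santal\'o point of $\widetilde L$. Hence $(\widetilde L)^{s(\widetilde L)} = (\widetilde L)^\circ$, and Theorem \ref{BS} yields $\vrad(\widetilde L)\,\vrad(\widetilde{L^\ast}) \leq 1$. Multiplying by the upper estimate of Lemma \ref{vrad-int-vs-d-metric} produces exactly \eqref{BS-dif-metric}. The only genuine bookkeeping is tracking the reflection- and translation-invariance of $\vrad$ through the chain $\widetilde L \leadsto (\widetilde L)^\circ \leadsto -\widetilde{L^\ast} \leadsto \widetilde{L^\ast_d}$; no new tools are required, and I expect this to be the only step where a misstep is easy.
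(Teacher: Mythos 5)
Your proposal is correct and follows essentially the same route as the paper: combine Proposition \ref{polar-dual} with the (reverse) Blaschke--Santal\'o inequality to get $\tfrac14\leq\vrad(\widetilde L)\vrad(\widetilde{L^\ast})$ (resp.\ $\leq 1$ under the Santal\'o-point hypothesis), then multiply by the ratio bounds of Lemma \ref{vrad-int-vs-d-metric}. Your extra care in checking $(4^{-N}\pi N)^{1/N}\geq\tfrac14$ and in tracking the reflection invariance of $\vrad$ is exactly the bookkeeping the paper leaves implicit.
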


\begin{proof}
Using Theorem \ref{revBS} and Proposition \ref{polar-dual}
we have that 
\begin{equation}
    \label{130123-0908}
      {\frac14}\leq \vrad(\widetilde{L})\vrad(\widetilde{L^{\ast}}).
\end{equation}
Using 
	\eqref{130123-0908} 
and
	Lemma \ref{vrad-int-vs-d-metric}
implies \eqref{revBS-dif-metric}.
Using Theorem \ref{BS} instead of Theorem \ref{revBS} in the reasoning above we obtain the moreover part.
\end{proof}

\subsection{Rogers-Shepard inequality}
\label{RShepard}

Let $K$ be a bounded convex set in $\RR^n$ with a non-empty interior.
The \textbf{difference body} $\symm(K)$ of $K$ \cite{RS57} is defined by
    \begin{equation}
        \label{difference-body}
	\symm(K):=K-K.
    \end{equation}

The following inequality compares the volumes of $K$ and $\symm(K)$.

\begin{theorem}
    [{Rogers-Shepard inequality, \cite[Theorem 1]{RS57}}]
    \label{RS}
     Let $K$ be a bounded convex set in $\RR^n$ with a non-empty interior. 
	Then 
		$$\Vol(\symm(K))\leq \binom{2n}{n}\Vol(K)$$
	and hence
            $$\vrad(\symm(K))\leq 4\vrad(K).$$
\end{theorem}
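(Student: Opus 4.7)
The plan is to apply the Brunn--Minkowski inequality to the \emph{covariogram} $\phi(z) := \Vol(K \cap (K+z))$, viewed as a function on $\symm(K) = K - K$. This function is supported precisely on $\symm(K)$, attains its maximum $\phi(0) = \Vol(K)$ at the origin, and vanishes on $\partial\symm(K)$. Moreover, from the set-theoretic inclusion
\begin{equation*}
\lambda\bigl(K\cap(K+z_1)\bigr) + (1-\lambda)\bigl(K\cap(K+z_2)\bigr) \;\subseteq\; K\cap\bigl(K+\lambda z_1+(1-\lambda)z_2\bigr),
\end{equation*}
the Brunn--Minkowski inequality implies that $\phi^{1/n}$ is concave on $\symm(K)$.

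Next, I would evaluate $\int_{\symm(K)}\phi(z)\,dz$ exactly by Fubini: writing $\phi(z) = \int_{\RR^n} \mathbf{1}_K(x)\mathbf{1}_K(x-z)\,dx$ and swapping the order of integration, the inner integral becomes $\int_{\RR^n}\mathbf{1}_K(x-z)\,dz = \Vol(K)$ for each $x \in K$, yielding the identity
\begin{equation*}
\int_{\symm(K)}\phi(z)\,dz = \Vol(K)^2.
\end{equation*}

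To close the loop I would combine this identity with a lower bound arising from the concavity of $\phi^{1/n}$. Since $\phi^{1/n}$ is concave on $\symm(K)$, vanishes on $\partial\symm(K)$, and equals $\Vol(K)^{1/n}$ at the origin, along every ray from $0$ it lies above the linear interpolant between these two values; that is,
\begin{equation*}
\phi(z)^{1/n} \;\geq\; \bigl(1 - \|z\|_{\symm(K)}\bigr)\Vol(K)^{1/n},
\end{equation*}
where $\|\cdot\|_{\symm(K)}$ is the Minkowski gauge of $\symm(K)$ centered at $0$. Raising to the $n$-th power and integrating in radial coordinates (either layer-cake or direct polar parametrization adapted to $\symm(K)$) reduces to the elementary beta integral $n\int_0^1(1-r)^n r^{n-1}\,dr = nB(n,n+1) = 1/\binom{2n}{n}$, giving
\begin{equation*}
\int_{\symm(K)}\phi(z)\,dz \;\geq\; \frac{\Vol(K)\,\Vol(\symm(K))}{\binom{2n}{n}}.
\end{equation*}
Comparing with the Fubini identity yields $\Vol(\symm(K))\leq \binom{2n}{n}\Vol(K)$, and the volume radius bound follows immediately from the classical estimate $\binom{2n}{n}\leq 4^n$. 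The only real subtlety is verifying the Brunn--Minkowski concavity of $\phi^{1/n}$ via the displayed set inclusion; everything else is an elementary radial calculation once this is in place.
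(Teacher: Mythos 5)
Your argument is correct: the set inclusion does yield concavity of $\phi^{1/n}$ on $\symm(K)$ via Brunn--Minkowski, the Fubini identity $\int\phi=\Vol(K)^2$ is right, and the gauge-polar integration of $(1-r)^n$ produces exactly $1/\binom{2n}{n}$, after which $\binom{2n}{n}\le 4^n$ gives the volume-radius form. The paper offers no proof of this theorem --- it is quoted from \cite[Theorem 1]{RS57} --- and your covariogram argument is precisely the classical Rogers--Shephard proof from that reference, so there is nothing to reconcile.
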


\begin{color}{black}
\begin{remark}
        Let $K_1,K_2$ be any of the cones in \eqref{cones-of-forms-studied}.	
        In this remark we discuss one possible approach, 
        based on applying Theorem \ref{RS},
        to establish 
        asymptotic behavior of the ratio of volume radii of the compact bases $\widetilde K_1,\widetilde K_2$ as $n$ goes to infinity. 
        As we explain next, using this approach gives tight
        estimates only for cones contained in $\nn_\cQ$.

        Assume that $K_1\subseteq K_2$ and  
        $\widetilde{K}_2\subseteq c\cdot\symm(\widetilde K_1)$
        for some constant $c$.
        Then by Theorem \ref{RS},
	the ratio $\frac{\vrad{K_1}}{\vrad{K_2}}$, as $n$ goes to infinity, is 
	bounded below by $\frac{1}{4c}$.
        To prove that this lower bound is strictly positive
        as $n$ goes to infinity, one has to argue that
        $c$ can be chosen independently of $n.$

        To derive a dilation constant $c$ from the
        previous paragraph, it suffices to consider the extreme points of $\widetilde{K}_2$. Let $p$ be such an extreme point and let $c_p$ be the smallest constant 
	such that $p\in c_p\cdot\symm(\widetilde K_1)$.
        A good choice for $c$ is  then
	\[c=\sup\{c_p\colon p\text{ is an extreme point of }\widetilde{K}_2\}.\]

	Let $p$ be of the form 
			$c_{ij}x_i^2x_j^2-(\sum_{i=1}^{n}x_i^2)^2$ for $c_{ij}\in \RR$, 
        where $c_{ij}\in \RR$ is such that $p\in \widetilde{K}_2$. It turns out by a simple computation that $p$
        belongs to $4\symm(\CP_\cQ)$ (see \eqref{constant7} below). 
	Since all extreme points of $\widetilde{\nn}_\cQ$ are of this form, this implies that 
		$\widetilde{K}_2\subseteq 4\cdot\symm(\widetilde K_1)$
	for any pair of cones $K_1,K_2$ sandwiched between $\CP_\cQ$ and $\nn_\cQ$. 
 
	However, not all extreme points of $\widetilde\Pos_\cQ$ and $\widetilde\Sos_\cQ$ 
	are of the simple form from the previous paragraph. For such extreme points $p$ it is not clear whether 
    some dilation constant $c_p$ as above can be chosen
    independently of $n$.
    This  is the main limitation preventing us from establishing Theorem \ref{intro-vrad-of-our-sets}
    solely by applying Theorem \ref{RS}.
\end{remark}
\end{color}

\section{Volume radii estimates of our cones}
\label{volume-radii}

In this section we prove our main results (see Theorems \ref{intro-vrad-of-our-sets} and \ref{vrad-quartics}) on the estimates of volume radii of the cones under investigation:

\begin{theorem}
    \label{vrad-of-our-sets}
    Let 
    $$
    \cC:=\{\Pos_\cQ,\Sos_\cQ=\SPN_{\cQ},\nn_\cQ,\psd_\cQ,\DNN_\cQ,\Lf_\cQ,\CP_\cQ\}
    $$
	be the set of cones in the vector space of even quartics $\cQ$.
    We have that
    \begin{equation}
        \label{vrad-asymptotics-with-constants}
		(2^4\sqrt{2})^{-1}\cdot n^{-1}
        \leq \vrad(\widetilde {\CP_\cQ}) 
        \leq \vrad(\widetilde {\Pos_\cQ})
        \leq 2^3\cdot 3^2\cdot \sqrt{2}\cdot n^{-1}.
    \end{equation}
    In particular, for every $K\in \cC$ it holds that  
    \begin{equation*}
        \label{vrad-asymptotics}
            \vrad(\widetilde K)=\Theta(n^{-1})
            \qquad
            \text{and}
            \qquad
    \frac{1}{2^8\cdot 3^2}
    \leq
    \frac
    {\vrad(\widetilde{K})}{\vrad(\widetilde{\Pos_\cQ})}
    \leq 
    1,
    \end{equation*}
\end{theorem}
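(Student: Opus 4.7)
My plan reduces the theorem via the chain of set inclusions \eqref{101222-1515}. Transferred along $A\mapsto q_A$, this gives $\widetilde{\CP_\cQ}\subseteq\widetilde K\subseteq\widetilde{\Pos_\cQ}$ for every $K\in\cC$. Since volume is monotone with respect to inclusion of Borel subsets of the fixed ambient $\cM$, it suffices to establish the two outer bounds in \eqref{vrad-asymptotics-with-constants}. I treat the lower bound (via $\nn_\cQ$ and $\CP_\cQ$) first, and then the upper bound (via $\Lf_\cQ$ and $\Pos_\cQ$).

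For the lower bound I first observe that $\nn_\cQ$ is self-dual in the differential metric. A direct Gram-matrix computation in the monomial basis of $\cQ$ gives $\langle x_i^4,x_j^4\rangle_d=24\,\delta_{ij}$, $\langle x_i^2 x_j^2,x_k^2 x_\ell^2\rangle_d = 4\,\delta_{\{i,j\},\{k,\ell\}}$ (for unordered pairs $i\neq j$, $k\neq\ell$), and mixed inner products $=0$, so the Gram matrix is diagonal with positive entries. Applying Corollary \ref{BSr-in-d-metric} to $L=\nn_\cQ$ then yields $\vrad(\widetilde{\nn_\cQ})^2\geq 2/((n+4)(n+6))$, i.e.\ $\vrad(\widetilde{\nn_\cQ})=\Omega(1/n)$. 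Next, the elementary identity
\[
2\,x_i^2x_j^2 = (x_i^2+x_j^2)^2 - x_i^4 - x_j^4
\]
writes every off-diagonal generator of $\nn_\cQ$ as a difference of two explicit elements of $\CP_\cQ$. Renormalizing so each generator lies on $\cL$ and then shifting by $(\sum_{i=1}^n x_i^2)^2$, a short bookkeeping delivers the inclusion $\widetilde{\nn_\cQ}\subseteq c\cdot\symm(\widetilde{\CP_\cQ})$ with an $n$-independent constant $c$. Rogers-Shepard (Theorem \ref{RS}) then yields $\vrad(\widetilde{\CP_\cQ})\geq\vrad(\widetilde{\nn_\cQ})/(4c)=\Omega(1/n)$, with the constants tracked to produce the explicit $(2^4\sqrt{2})^{-1}n^{-1}$.

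For the upper bound I would apply the analogous recipe with $\Lf_\cQ$ in place of $\CP_\cQ$ together with duality. The identity $6\,x_i^2 x_j^2=\pr_\cQ((x_i+x_j)^4)-x_i^4-x_j^4$, combined with $\pr_\cQ((x_i+x_j)^4),\ x_i^4\in\Lf_\cQ$, yields $\widetilde{\nn_\cQ}\subseteq c'\cdot\symm(\widetilde{\Lf_\cQ})$ and hence $\vrad(\widetilde{\Lf_\cQ})=\Omega(1/n)$. Next I would prove that $\Pos_\cQ$ and $\Lf_\cQ$ are mutual duals in $\cQ$ under the differential metric: from the identity $\langle v^4,g\rangle_d=24\,g(v)$ and the fact (verified by monomial orthogonality) that $\pr_\cQ$ is orthogonal in both the $L^2$ and the differential metrics, one gets $\Lf_\cQ\subseteq(\Pos_\cQ)^\ast_d$, while the reverse inclusion follows from a Hahn-Banach separation argument using closedness of $\Lf_\cQ$. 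Finally, the moreover part of Corollary \ref{BSr-in-d-metric} applied to $L=\Lf_\cQ$ yields
\[
\vrad(\widetilde{\Lf_\cQ})\,\vrad(\widetilde{\Pos_\cQ})\leq\Bigl(\tfrac{8}{(n+4)(n+6)}\Bigr)^{1-\frac{2n-1}{n^2+n-1}}=O(1/n^2);
\]
dividing by the $\Omega(1/n)$ lower bound on $\vrad(\widetilde{\Lf_\cQ})$ gives $\vrad(\widetilde{\Pos_\cQ})=O(1/n)$, with the explicit constant $2^8\sqrt{2}$ after tracking.

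The hard part will be the Santal\'o-point hypothesis in the final step: the forward Blaschke-Santal\'o upper bound applied to $L=\Lf_\cQ$ demands that $(\sum_{i=1}^n x_i^2)^2$ actually be the Santal\'o point of $\widetilde{\Lf_\cQ}$, which is not automatic. The $S_n\ltimes\{\pm 1\}^n$ symmetry of $\Lf_\cQ$ confines the Santal\'o point to the one-dimensional invariant subspace of $\cM$ (spanned, for instance, by $\sum_{i=1}^n x_i^4-\tfrac{3}{n-1}\sum_{i\neq j}x_i^2 x_j^2$), but does not by itself pin it down to the origin. Resolving this is where the bulk of the work lies: I would try either an additional symmetry argument, a direct variational computation of $\Vol((\widetilde{\Lf_\cQ})^z)$ along the invariant line, or a wholly different (Santal\'o-free) derivation of the upper bound on $\vrad(\widetilde{\Pos_\cQ})$.
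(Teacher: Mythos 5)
Your outline reproduces the paper's argument step for step: self-duality of $\nn_\cQ$ in the differential metric plus the reverse Blaschke--Santal\'o inequality to get $\vrad(\widetilde{\nn_\cQ})\geq(\sqrt2\,n)^{-1}$; the identities $2x_i^2x_j^2=(x_i^2+x_j^2)^2-x_i^4-x_j^4$ and $6x_i^2x_j^2=\pr_\cQ((x_i+x_j)^4)-x_i^4-x_j^4$ applied to the extreme points of $\widetilde{\nn_\cQ}$ to place it inside $4\symm(\widetilde{\CP_\cQ})$ and $2\symm(\widetilde{\Lf_\cQ})$, followed by Rogers--Shepard; and the duality $(\Lf_\cQ)^\ast_d=\Pos_\cQ$ from $\langle f,\pr_\cQ(v^4)\rangle_d=24f(v)$ feeding into the forward Blaschke--Santal\'o bound. (Minor remark: no Hahn--Banach separation is needed for the reverse inclusion $(\Pos_\cQ)\subseteq(\Lf_\cQ)^\ast_d$ --- the dual of the cone generated by $\{\pr_\cQ(v^4)\}_{v}$ is by definition exactly $\{f\colon f(v)\geq 0\ \forall v\}$.)

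However, the proposal is not a complete proof: the Santal\'o-point hypothesis that you correctly flag as the crux is left unresolved, and without it the inequality $\vrad(\widetilde{\Lf_\cQ})\vrad(\widetilde{\Pos_\cQ})\leq\bigl(\tfrac{8}{(n+4)(n+6)}\bigr)^{1-\frac{2n-1}{n^2+n-1}}$ is unavailable, so the upper bound on $\vrad(\widetilde{\Pos_\cQ})$ does not follow. Your diagnosis that the hyperoctahedral symmetry $S_n\ltimes\{\pm1\}^n$ only confines the Santal\'o point to a line in $\cM$ is exactly right, and the fix is the first alternative you list: enlarge the symmetry group. The paper observes that the \emph{full} orthogonal group acts on $\cQ$ by $L_O f:=\pr_\cQ(f(O\x))$ and that $\widetilde{\Lf_\cQ}$ is invariant under every $L_O$, because $f_i(O\x)$ is again a linear form and $(\sum_i x_i^2)^2$ is $L_O$-fixed. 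One then checks (using transpositions to force $a_{ii}\equiv c$, $a_{ij}\equiv d$, and a rotation by $\pi/4$ in a coordinate plane to force $d=2c$) that the only $O(n)$-fixed points of $\cQ$ under this action are the multiples of $(\sum_i x_i^2)^2$, i.e.\ the origin is the unique fixed point of $\cM$. Since the Santal\'o point is unique and must be fixed by every affine symmetry of the body, it is the origin. With this lemma supplied, your argument closes and yields the stated constants.
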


Before proving Theorem \ref{vrad-of-our-sets} we need three lemmas.
The first lemma compares the section 
	$\widetilde{\nn_\cQ}$ 
with the sections
	$\widetilde{\Lf_\cQ}$ and $\widetilde{\cp_\cQ}$.

\begin{lemma}
    \label{inclusion-of-sections}
    The following inclusions hold:
    \begin{enumerate}[\rm(1)]
    \item\label{inclusion-NN-Lf}
        $\widetilde{\Lf_\cQ}
            \subseteq \widetilde{\nn_\cQ}
            \subseteq 2\symm(\widetilde{\Lf_\cQ})$.
        \smallskip
    \item\label{inclusion-NN-CP}
        $\widetilde{\cp_\cQ}
            \subseteq \widetilde{\nn_\cQ}
            \subseteq 4\symm(\widetilde{\cp_\cQ})$.
        \medskip
    \end{enumerate}
\end{lemma}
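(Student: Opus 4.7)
My plan is to deduce both displays from the cone-level containments $\Lf_\cQ\subseteq\nn_\cQ$ and $\cp_\cQ\subseteq\nn_\cQ$, together with an explicit vertex-by-vertex decomposition of the polytope $\widetilde{\nn_\cQ}$. Both cone inclusions are transparent from the generators: $\pr_\cQ(v^4)=\sum_i v_i^4 x_i^4+6\sum_{i<j} v_i^2v_j^2 x_i^2x_j^2$ and $(\sum_j c_j x_j^2)^2$ both have nonnegative coefficients, and passing to the shifted sections gives the left halves of both displays.

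For the right halves, set $S:=(x_1^2+\cdots+x_n^2)^2$. Since $\nn_\cQ$ is polyhedral with extreme rays $\RR_{\geq 0}\,x_i^2x_j^2$ ($1\leq i\leq j\leq n$), $\widetilde{\nn_\cQ}$ is the convex hull of the vertices
\[
P_{ij}:=n(n+2)\,x_i^2x_j^2-S\quad (i<j),\qquad P_{ii}:=\tfrac{n(n+2)}{3}\,x_i^4-S,
\]
where the scaling normalizes each extreme ray to have average $1$ on $S^{n-1}$ via \eqref{integral-of-x4}--\eqref{integral-of-x2y2}. Since $\symm(\widetilde K)$ is convex for any $K$, it suffices to write every vertex as the prescribed constant times a difference of two elements of $K\cap\cL$. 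For $K=\Lf_\cQ$ the diagonal case is $P_{ii}=2\bigl(\tfrac{n(n+2)}{6}x_i^4+\tfrac{S}{2}-S\bigr)$; the off-diagonal case uses the identity $\pr_\cQ\bigl((\tfrac{e_i+e_j}{\sqrt2})^4\bigr)=\tfrac14(x_i^4+x_j^4)+\tfrac32 x_i^2x_j^2$ to give
\[
P_{ij}=2\Bigl(\tfrac{n(n+2)}{3}\pr_\cQ\bigl((\tfrac{e_i+e_j}{\sqrt2})^4\bigr)-\tfrac{n(n+2)}{12}(x_i^4+x_j^4)-\tfrac{S}{2}\Bigr).
\]
For $K=\cp_\cQ$ the factorization $x_i^2x_j^2=\tfrac12[(x_i^2+x_j^2)^2-x_i^4-x_j^4]$ yields $P_{ij}=4\bigl(\tfrac{n(n+2)}{8}(x_i^2+x_j^2)^2-\tfrac{n(n+2)}{8}(x_i^4+x_j^4)-\tfrac{S}{4}\bigr)$ and $P_{ii}=4\bigl(\tfrac{n(n+2)}{12}x_i^4+\tfrac{3S}{4}-S\bigr)$. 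A short computation using \eqref{integral-of-x4}--\eqref{integral-of-x2y2} verifies that each summand has average exactly $1$ on $S^{n-1}$, so it lands in $\cL$; the constants $2$ and $4$ are then forced by the ``overshoot'' needed to cancel the monomials absent from the small cone.

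The one nontrivial membership check is $S\in\Lf_\cQ$ (for $\cp_\cQ$ this is free, as $S$ is itself a generator). The cleanest route is via the sphere average $\int_{S^{n-1}}v^4\,\dd\sigma(v)=\tfrac{3}{n(n+2)}S$, immediate from \cite[Lemma 8]{Bar02}; approximating by Riemann sums exhibits $S$ as a limit of conic combinations of $\pr_\cQ(v^4)$, and closedness of $\Lf_\cQ$ finishes the job. As a hands-on alternative one may invoke a classical Hilbert identity expressing $S$ as a finite nonnegative rational combination of fourth powers of linear forms. Apart from this, the only subtlety I anticipate is bookkeeping: engineering the padding terms (the $S/2$, $3S/4$, etc.) so that both halves of each difference simultaneously lie in the small cone and in the affine slice $\cL$.
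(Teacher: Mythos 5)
Your proposal is correct and follows essentially the same route as the paper: reduce to the two types of extreme points of the simplex $\widetilde{\nn_\cQ}$ and use the identities $\pr_\cQ((x_i+x_j)^4)-x_i^4-x_j^4=6x_i^2x_j^2$ and $(x_i^2+x_j^2)^2-x_i^4-x_j^4=2x_i^2x_j^2$ with the same dilation constants $2$ and $4$. Your repackaging of each vertex as a single difference $c(a-b)$ with $a,b\in K\cap\cL$, and your explicit verification that $(\sum_i x_i^2)^2\in\Lf_\cQ$, are fine variants of the paper's bookkeeping, which instead writes each vertex as $p_1+\tfrac12(p_1-p_2)+\tfrac12(p_1-p_3)$ (resp.\ with coefficients $\tfrac32$).
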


\begin{proof}
    The first inclusions in 
        \ref{inclusion-NN-Lf} 
    and
        \ref{inclusion-NN-CP}
    are clear. 
    To prove the remaining inclusions of the lemma note that
    it suffices to prove that every extreme point of $\widetilde{\nn_\cQ}$
    is contained in the corresponding set.
    Note that the extreme points of $\widetilde{\nn_{\cQ}}$
    are of two types:
    \begin{align}
        \label{ext-nn-type1}
            &\displaystyle\frac{n(n+2)}{3}x_i^4-\big(\sum_{i=1}^{n}x_i^2)^2\quad
            \text{for some }i=1,\ldots,n,\\
        \label{ext-nn-type2}
            &\displaystyle n(n+2)x_i^2x_j^2-\big(\sum_{i=1}^{n}x_i^2)^2\quad
            \text{for some }i,j=1,\ldots,n,\ i\neq j.
    \end{align}
    The extreme points of the form \eqref{ext-nn-type1} clearly belong to
    both sections 
	$\widetilde{\Lf_\cQ}$, $\widetilde{\cp_\cQ}$,
    hence also to the dilations of their difference bodies.
    So it remains to study the extreme points of the form \eqref{ext-nn-type2}.
    For the case of $\widetilde{\Lf_\cQ}$ we have the following computation:
    \begin{align}
	  \label{constant-2}
	  \begin{split}
        &n(n+2)x_i^2x_j^2-\big(\sum_{i=1}^{n}x_i^2)^2=\\
        &=
        \frac{n(n+2)}{6} 
        \left(
        \pr_\cQ\big((x_i+x_j)^4-x_i^4-x_j^4\big)
        \right)
        -\big(\sum_{i=1}^{n}x_i^2)^2\\
        &=
        2\underbrace{\left(
        \frac{n(n+2)}{12}\pr_\cQ((x_i+x_j)^4)-\big(\sum_{i=1}^{n}x_i^2)^2
        \right)}_{p_1}
        -
       \frac{1}{2}
	   \underbrace{\left(
        \frac{n(n+2)}{3}x_i^4-\big(\sum_{i=1}^{n}x_i^2)^2
        \right)}_{p_2}\\
        &\hspace{1cm}-
        \frac{1}{2}\underbrace{\left(
        \frac{n(n+2)}{3}x_j^4-\big(\sum_{i=1}^{n}x_i^2)^2
        \right)}_{p_3}
	   \begin{color}{black}=p_1+\frac{1}{2}(p_1-p_2)+\frac{1}{2}(p_1-p_3)\end{color}\\
        &\in 
        \begin{color}{black}\widetilde{\Lf_\cQ}+
        \frac{1}{2}\symm(\widetilde{\Lf_\cQ})+
        \frac{1}{2}\symm(\widetilde{\Lf_\cQ})
        \subseteq
        2\symm(\widetilde{\Lf_\cQ}),\end{color}
	  \end{split}
    \end{align}
	where we used \eqref{integral-of-x4} and \eqref{integral-of-x2y2} in the containment of the last line.
	This concludes the proof of \ref{inclusion-NN-Lf}.
	Similarly for the case of $\widetilde{\cp_\cQ}$ the following computation holds:
	\begin{align}
	\label{constant7}
	\begin{split}
        &n(n+2)x_i^2x_j^2-\big(\sum_{i=1}^{n}x_i^2)^2=\\
        &=
        \frac{n(n+2)}{2} 
        \left(
        (x_i^2+x_j^2)^2-x_i^4-x_j^4\big)
        \right)
        -\big(\sum_{i=1}^{n}x_i^2)^2\\
        &=
        4\underbrace{\left(
        \frac{n(n+2)}{8}(x_i^2+x_j^2)^2)-
        \big(\sum_{i=1}^{n}x_i^2)^2
        \right)}_{p_4}
        -
        \frac{3}{2}\left(
        \frac{n(n+2)}{3}x_i^4-\big(\sum_{i=1}^{n}x_i^2)^2
        \right)\\
        &\hspace{1cm}-
        \frac{3}{2}\left(
        \frac{n(n+2)}{3}x_j^4-\big(\sum_{i=1}^{n}x_i^2)^2
        \right)=\begin{color}{black}p_4+\frac{3}{2}(p_4-p_2)+\frac{3}{2}(p_4-p_3)\end{color}\\
        &\in 
        \begin{color}{black}\widetilde{\cp_\cQ}+
        \frac{3}{2}\symm(\widetilde{\cp_\cQ})+
        \frac{3}{2}\symm(\widetilde{\cp_\cQ})
        \subseteq 
        4\symm(\widetilde{\cp_\cQ}),\end{color}
	\end{split}
    \end{align}
	\begin{color}{black}where $p_2,p_3$ are as in \eqref{constant-2}\end{color} and we used \eqref{integral-of-x4} and \eqref{integral-of-x2y2} in the containment of the last line.
	This concludes the proof of \ref{inclusion-NN-CP} and the lemma.
\end{proof}

\begin{remark}
	By the same reasoning as in the proof of Lemma \ref{inclusion-of-sections}
	one can show that 
		$$\widetilde{\nn_\cQ}\subseteq 2\symm(\widetilde{\psd_\cQ}).$$
	Indeed, for the extreme points of $\widetilde{\nn_\cQ}$ of the form \eqref{ext-nn-type2} 
	this inclusion follows by the following computation:
\begin{align*}
        &n(n+2)x_i^2x_j^2-\big(\sum_{i=1}^{n}x_i^2)^2=\\
        &=
        \frac{n(n+2)}{4} 
        \left(
        (x_i^2+x_j^2)^2-(x_i^2-x_j^2)^2
        \right)
        -\big(\sum_{i=1}^{n}x_i^2)^2\\
        &=
        2\underbrace{\left(
        \frac{n(n+2)}{8}(x_i^2+x_j^2)^2-
        \big(\sum_{i=1}^{n}x_i^2)^2
        \right)}_{p_1}
        -
        \underbrace{\left(
        \frac{n(n+2)}{4}(x_i^2-x_j^2)^2-
        \big(\sum_{i=1}^{n}x_i^2)^2
        \right)}_{p_2}\\
        &=p_1+(p_1-p_2) 
        \in \widetilde{\psd_\cQ}+
        \symm(\widetilde{\psd_\cQ})
        \subseteq 
        2\symm(\widetilde{\psd_\cQ}).
\end{align*}
\end{remark}

The second lemma needed in the proof of Theorem \ref{vrad-of-our-sets} 
establishes two dualities in the differential metric between the sections of the cones from 
Theorem \ref{vrad-of-our-sets}.

\begin{lemma}
   \label{dualities-diff-metric}
    We have the following dualities in the differential metric:
    \begin{enumerate}[\rm(1)]
    \item
        \label{NN-self-dual-in-d}
        $\widetilde{(\nn_\cQ)_d^\ast}=\widetilde{\nn_\cQ}.$
    \medskip
    \item
        \label{dual-of-Pos-in-d}
        $\widetilde{(\Lf_\cQ)_d^\ast}=\widetilde{\Pos_\cQ}.$
    \end{enumerate}
\end{lemma}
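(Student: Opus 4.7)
My plan is to reduce both tilded identities to the corresponding non-tilded cone equalities $(\nn_\cQ)^\ast_d = \nn_\cQ$ and $(\Lf_\cQ)^\ast_d = \Pos_\cQ$; once these are established, the conclusion is immediate from the definition $\widetilde K = \{f \in \cM : f + (\sum_i x_i^2)^2 \in K\}$. The key observation throughout is that for $f = \sum_{i,j=1}^n a_{ij} x_i^2 x_j^2 \in \cQ$ (with $a_{ij}=a_{ji}$) the associated differential operator reads
\[
D_f \;=\; \sum_{i=1}^n a_{ii}\, \frac{\partial^4}{\partial x_i^4} \;+\; \sum_{i\neq j} a_{ij}\, \frac{\partial^4}{\partial x_i^2 \partial x_j^2},
\]
and each of these summands annihilates every degree-$4$ monomial except the one matching its derivative pattern. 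In particular, $D_f$ is diagonal on the monomial basis of $\cQ$ and vanishes on every degree-$4$ monomial whose multi-index has at least one odd entry.

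For part \ref{NN-self-dual-in-d}, I would use that $\nn_\cQ$ is the convex cone generated by the monomials $x_i^4$ ($1\leq i\leq n$) and $x_i^2 x_j^2$ ($i\neq j$), so membership in the differential dual reduces to nonnegativity of $D_f$ on these generators. A direct calculation yields $D_f(x_i^4) = 24\, a_{ii}$ and $D_f(x_i^2 x_j^2) = 8\, a_{ij}$ for $i \neq j$; hence $f \in (\nn_\cQ)^\ast_d$ if and only if $a_{ij}\geq 0$ for all $i,j$, that is, $f\in\nn_\cQ$.

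For part \ref{dual-of-Pos-in-d}, the cone $\Lf_\cQ$ is generated by $\{\pr_\cQ(v^4) : v \in S^{n-1}\}$. Since $v^4 - \pr_\cQ(v^4)$ is a linear combination of monomials with at least one odd-multiplicity index, the annihilation property above gives $D_f(\pr_\cQ(v^4)) = D_f(v^4)$ for every $f \in \cQ$. Using $\partial^4 v^4/\partial x_i^4 = 24\, v_i^4$ and $\partial^4 v^4/(\partial x_i^2 \partial x_j^2) = 24\, v_i^2 v_j^2$ one obtains
\[
D_f(v^4) \;=\; 24\sum_{i,j=1}^n a_{ij}\, v_i^2 v_j^2 \;=\; 24\, f(v),
\]
so $f \in (\Lf_\cQ)^\ast_d$ if and only if $f(v)\geq 0$ on $S^{n-1}$, equivalently (by homogeneity) $f \in \Pos_\cQ$.

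The only step requiring genuine care is the identification $D_f(\pr_\cQ(v^4)) = D_f(v^4)$ in part \ref{dual-of-Pos-in-d}: one must know precisely which monomials of degree $4$ are killed by $D_f$. However, the explicit combinatorial form of $D_f$ displayed above makes the annihilation transparent, so I do not foresee a serious obstacle.
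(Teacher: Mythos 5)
Your proposal is correct and follows essentially the same route as the paper: reduce the tilded identities to the cone equalities $(\nn_\cQ)^\ast_d=\nn_\cQ$ and $(\Lf_\cQ)^\ast_d=\Pos_\cQ$, test against the generators $x_i^4$, $x_i^2x_j^2$ (giving $24a_{ii}$ and $8a_{ij}$) for part (1), and use the identity $\langle f,\pr_\cQ(v^4)\rangle_d=24f(v)$ for part (2). Your extra remark that $D_f$ annihilates the odd-multiplicity monomials, so that $D_f(\pr_\cQ(v^4))=D_f(v^4)$, is exactly the detail the paper leaves implicit.
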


\begin{proof}
    First we prove \ref{NN-self-dual-in-d}.
    It is equivalent to establish 
        $(\nn_\cQ)_d^\ast=\nn_\cQ$.
    We have:
    \begin{align*}
    (\nn_\cQ)_d^\ast
    &=
    \left\{
        \sum_{i,j=1}^n a_{ij}x_i^2x_j^2\in \cQ\colon
            \big\langle 
                \sum_{i,j=1}^n a_{ij}x_i^2x_j^2,g
            \big\rangle_d\geq 0\quad \forall g\in \nn_\cQ
    \right\}\\
    &=
    \left\{
        \sum_{i,j=1}^n a_{ij}x_i^2x_j^2\in \cQ\colon
            \big\langle 
                \sum_{i,j=1}^n a_{ij}x_i^2x_j^2,x_k^2x_\ell^2
            \big\rangle_d\geq 0\quad 
            \forall k,\ell=1,\ldots,n
    \right\}\\
    &=\left\{
        \sum_{i,j=1}^n a_{ij}x_i^2x_j^2\in \cQ\colon
            a_{k\ell}\geq 0\quad 
            \forall k,\ell=1,\ldots,n
    \right\}\\
    &=\nn_\cQ,
    \end{align*}
    where in the second equality we used that the extreme points 
    of $\nn_\cQ$ are of the form $c^2x_k^2x_\ell^2$ for some $c\in \RR$ and $k,\ell=1,\ldots,n$,
    while in the third equality we used that
    $\big\langle 
                \sum_{i,j=1}^n a_{ij}x_i^2x_j^2,x_k^2x_\ell^2
     \big\rangle_d
     =24a_{k\ell} 
     $
     if $k=\ell$
     and
     $4(a_{k\ell}+a_{\ell k})=8a_{k\ell}=8a_{\ell k}$ otherwise.

    It remains to prove \ref{dual-of-Pos-in-d}.
    This easily follows by observing that for $f\in \cQ$ we have
    $$
        \Big\langle 
            f,
            \pr_\cQ\Big(\big(\sum_{i=1}^n v_ix_i\big)^4\Big)
        \Big\rangle_d
        =
        24f(v),
    $$
    for any $v=(v_1,\ldots,v_n)\in \RR^n$.
\end{proof}

The third lemma needed in the proof of Theorem \ref{vrad-of-our-sets} 
identifies the Santal\'o point of $\widetilde{\Lf_\cQ}$.

\begin{lemma}
	\label{Santalo-of-Lf}
		The Santal\'o point of $\widetilde{\Lf_\cQ}$ is the origin.
\end{lemma}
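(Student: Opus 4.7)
The plan is to use that the Santal\'o point of a convex body is fixed by every affine symmetry of the body, combined with the characterization that $s(K)=0$ is equivalent to the centroid of the polar $K^\circ$ being the origin. After Proposition \ref{polar-dual}, this latter condition becomes $\operatorname{cent}(\widetilde{(\Lf_\cQ)^*})=0$ in $\cM$.

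The symmetric group $S_n$ acts on $\cQ$ by permuting the variables; this action is a linear $L^2$-isometry and preserves $\Lf_\cQ$ (it permutes its extreme rays $\pr_\cQ(\ell^4)$), hence also the dual $(\Lf_\cQ)^*$ and the section $\widetilde{(\Lf_\cQ)^*}$. By affine equivariance of the centroid, $\operatorname{cent}(\widetilde{(\Lf_\cQ)^*})$ lies in the $S_n$-invariant subspace of $\cM$. Lemma \ref{090123-1550}(2), (3) together with a direct Laplacian computation identify this subspace as the one-dimensional line inside $\widetilde{\cH_4}$ spanned by the harmonic polynomial
\[
f_0 \;:=\; \sum_{i=1}^n x_i^4 \;-\; \tfrac{3}{n-1}\sum_{i\neq j} x_i^2 x_j^2,
\]
so $\operatorname{cent}(\widetilde{(\Lf_\cQ)^*}) = c\,f_0$ for some $c\in\RR$ and it remains to show $c=0$.

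For the latter, I use the operator $T$ of Subsection \ref{the-differential-metric}. Writing $Tf(x)=\int_{S^{n-1}} f(v)(v\cdot x)^4\,\mathrm{d}\sigma(v)$ and applying Fubini shows that $T$ is self-adjoint on $\cQ$ with respect to the $L^2$ inner product and that $\langle f,v^4\rangle=Tf(v)$; combined with Lemma \ref{T-properties}(5) this yields the dual-cone description $(\Lf_\cQ)^* = T^{-1}(\Pos_\cQ)$. The rescaling $T' := \tfrac{n(n+2)}{3}\,T$ fixes $(\sum_i x_i^2)^2$ by Lemma \ref{T-properties}(2), acts as the strict contraction $\tfrac{8}{(n+4)(n+6)}$ on $\widetilde{\cH_4}$ by Lemma \ref{T-properties}(3), and---because $\Pos_\cQ$ is a cone---induces a linear bijection $T'\colon\widetilde{(\Lf_\cQ)^*}\to\widetilde{\Pos_\cQ}$. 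By affine equivariance of the centroid,
\[
\operatorname{cent}(\widetilde{\Pos_\cQ}) \;=\; \tfrac{8}{(n+4)(n+6)}\,c\,f_0,
\]
so the $f_0$-coefficient of $\operatorname{cent}(\widetilde{\Pos_\cQ})$ is a strict contraction of $c$.

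The main obstacle, and what I expect to be the technical heart of the argument, is to close this relation so as to deduce $c=0$. In sharp contrast to Blekherman's $O(n)$-invariant setting of \cite{Ble06}, the residual $S_n$-symmetry alone confines $s(\widetilde{\Lf_\cQ})$ only to a line rather than to a point; to finish one must either (i) exhibit an affine involution of $\widetilde{(\Lf_\cQ)^*}$ acting as $-\operatorname{id}$ on $\RR f_0$, which would immediately force $c=0$ by the oddness of the resulting moment, or (ii) leverage the strict contraction $\tfrac{8}{(n+4)(n+6)}<1$ together with a second, independent identification of $\operatorname{cent}(\widetilde{\Pos_\cQ})$ in terms of $c$ (obtained by applying Proposition \ref{polar-dual} once more to $\Pos_\cQ$), whose only consistent joint solution is $c=0$.
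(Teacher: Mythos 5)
There is a genuine gap, and it is exactly the one you flag yourself: after reducing to the $S_n$-invariant line $\RR f_0\subseteq\widetilde{\cH_4}\cap\cM$, you never prove $c=0$. Option (i) is not carried out (no involution acting as $-\operatorname{id}$ on $\RR f_0$ is exhibited, and it is not clear one exists that preserves $\widetilde{(\Lf_\cQ)^\ast}$). Option (ii) does not close as stated: the relation $\operatorname{cent}(\widetilde{\Pos_\cQ})=\tfrac{8}{(n+4)(n+6)}\,c\,f_0$ is a single equation expressing the centroid of a \emph{different} body in terms of $c$; there is no second, independent expression of $\operatorname{cent}(\widetilde{\Pos_\cQ})$ in terms of $c$ forthcoming from Proposition \ref{polar-dual} (that proposition relates polars to duals, it does not compute centroids), so no contradiction with $\tfrac{8}{(n+4)(n+6)}<1$ is produced. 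The reductions you do carry out (the $S_n$-equivariance, the identification of the invariant line, the identity $(\Lf_\cQ)^\ast=T^{-1}(\Pos_\cQ)$) are all correct, but the argument is not a proof.

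The missing idea is that the symmetry group of $\widetilde{\Lf_\cQ}$ is larger than $S_n$: for every $O\in O(n)$ the map $L_O f:=\pr_\cQ\big(f(O\x)\big)$ preserves $\widetilde{\Lf_\cQ}$, because $\Lf_\cQ$ consists of $\pr_\cQ$ of sums of fourth powers of linear forms and precomposition with $O$ permutes linear forms, while $\pr_\cQ\circ\pr_\cQ=\pr_\cQ$ and $(\sum_i x_i^2)^2$ is fixed. This is how the paper proceeds (working directly with $\widetilde{\Lf_\cQ}$ and the uniqueness of the Santal\'o point, rather than with the centroid of the polar, though either formulation would do). The decisive extra symmetries are the rotations $O_{k\ell}$ with $e_k\mapsto\tfrac{1}{\sqrt2}(e_k-e_\ell)$, $e_\ell\mapsto\tfrac{1}{\sqrt2}(e_k+e_\ell)$: comparing the coefficient of $x_k^4$ in $L_{O_{k\ell}}f=f$ forces $d=2c$ on top of the permutation constraints $a_{ii}\equiv c$, $a_{ij}\equiv d$, so the only $L_O$-fixed vectors in $\cQ$ are multiples of $(\sum_i x_i^2)^2$ and the fixed-point space in $\cM$ is zero. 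In other words, your residual line $\RR f_0$ is not fixed by the projected $O(n)$-action, and adding these rotations is precisely what kills it. Without some such additional input, the proof is incomplete.
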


\begin{proof}
	Every element $O$ of the orthogonal group $O(n)$ defines a linear map 
	\begin{equation}
		\label{def-of-O-action}
		L_O:\cQ\to\cQ,\qquad (L_Of)(\x):=\pr_\cQ\big(f(O\x)\big),
	\end{equation}
	where $\pr_\cQ$ is defined as in \eqref{proj-Q}.

	We will prove that $\widetilde{\Lf_\cQ}$ is invariant under every map $L_O$, $O\in O(n)$ and 
	the origin is the only fixed point of $\widetilde{\Lf_\cQ}$.
	Since the Santal\'o point of a convex body is unique, the statement of the lemma will follow from these two facts.
	
	First we prove 
		$$r(\x):=\big(\sum_{i=1}^n x_i^2\big)^2$$
	is a fixed point of every map $L_O$ defined by \eqref{def-of-O-action}.
	Since $r(O\x)\equiv 1$ on $S^{n-1}$ for every $O\in O(n)$ and $r(\x)$ is the only form from $\RR[\x]_4$
	such that $r(\x)\equiv 1$ on $S^{n-1}$, it follows that 
		$r(O\x)=r(\x)$ for every $O\in O(n)$.
	Hence, 
	\begin{equation}
		\label{r-fixed}
			(L_Or)(\x)=\pr_\cQ\big(r(O\x)\big)=\pr_\cQ\big(r(\x)\big)=r(\x)
	\end{equation}
	and $r(\x)$ is indeed a fixed point of every map $L_O$ defined by \eqref{def-of-O-action}.

	Next we prove that $\widetilde{\Lf_\cQ}$ is invariant for every map $L_O$ defined by \eqref{def-of-O-action}.
	Let us choose an arbitrary $g\in\widetilde{\Lf_\cQ}$. Then $g$ is of the form
	\begin{equation*}
		\label{form-of-g}
			g(\x)=\pr_\cQ\big(\sum_{i} f_i^4\big)-r(\x),
	\end{equation*}
	where $f_i\in \RR[\x]_1$.
	For $O\in O(n)$ we have that 
	\begin{align}
		\label{action-on-g}
		\begin{split}
		(L_Og)(\x)
		&=\pr_\cQ\Big(\pr_\cQ\big(\sum_{i} (f_i(O\x))^4\big)-r(O\x)\Big)\\
		&=\pr_\cQ\Big(\sum_{i} (f_i(O\x))^4\Big)-r(\x)\in \widetilde{\Lf_\cQ},
		\end{split}
	\end{align}
	where we used \eqref{r-fixed} in the second equality, 
	while for the containment we used the fact that 
	$f_i(O\x)\in \RR[\x]_1$ for every $O\in O(n)$.
	Since $g$ was arbitrary, \eqref{action-on-g} proves that 
	$\widetilde{\Lf_\cQ}$ is indeed invariant for every map $L_O$ defined by \eqref{def-of-O-action}.

	It remains to prove that the origin is the only fixed point of $\widetilde{\Lf_\cQ}$ for every map $L_O$, $O\in O(n)$.
	It suffices to prove that fixed points of $\cQ$ are of the form $cr(\x)$ for $c\in \RR$, since the only $c$ such that
	$cr(\x)-r(\x)\in \cM$ is equal to 1.
	So  let 
	$$
		f(\x)=\sum_{1\leq i \leq j\leq n}a_{ij}x_i^2x_j^2\in \cQ
	$$
	be a fixed point (here we used that the coefficients $a_{ij}$ and $a_{ji}$ in \eqref{101222-1925} are the same for every form in $\cQ$).
	Let $\pi_{k\ell}=(k\; \ell)$, $1\leq k<\ell\leq n$, be a transposition. We have
	\begin{align}
		\label{act-by-transpose}
		\begin{split}
		(L_{\pi_{k\ell}}f)(\x)
			&=
			a_{kk}x_\ell^4+a_{\ell\ell}x_k^4+
			\sum_{i\notin\{k,\ell\}} a_{ii}x_i^4+
			a_{k\ell}x_k^2x_\ell^2+
			\sum_{i<k} a_{ik} x_{i}^2x_{\ell}^2+
			\sum_{\substack{j> k,\\ j\neq \ell}} a_{kj} x_{\ell}^2x_{j}^2\\
			&\hspace{1cm}+
			\sum_{\substack{i< \ell,\\ i\neq k}} a_{i \ell} x_{i}^2x_{k}^2+
			\sum_{j>\ell} a_{\ell j} x_{k}^2x_{j}^2+
			\sum_{i,j\notin\{k,\ell\}}a_{ij}x_i^2x_j^2.
		\end{split}
	\end{align}
	Since $L_{\pi_{k\ell}}f=f$, 
	it follows in particular from \eqref{act-by-transpose} 
	by considering the coefficient at $x_k^4$
		that $a_{\ell\ell}=a_{kk}$,
	while the comparison of the coefficients at $x_{k}^2 x^2_{m}$ for $m\notin \{k,\ell\}$ implies that
		$a_{\ell m}=a_{k m}$ if $k<m$ and $a_{m\ell}=a_{mk}$ if $k>m$.
	Note that $k,\ell$ and $m\notin \{k,\ell\}$ were arbitrary. So we conclude that
	there are constants $c,d\in\RR$ such that
	$$
		a_{ii}=c\quad \text{for all }i=1,\ldots,n
		\qquad\text{and}\qquad 
		a_{ij}=d\quad \text{for all }1\leq i<j\leq n.
	$$
	Hence, 
	$$
		f(\x)
		=c\Big(\sum_{i=1}^n x_i^4\Big)+d\Big(\sum_{1\leq i<j\leq n}x_i^2x_j^2\Big).
	$$
	Let $O_{k\ell}$, $1\leq k< \ell\leq n$, be an orthogonal transformation defined on the 
	standard basis vectors $e_i$, $i=1,\ldots,n$, having the only nonzero entry in the $i$-th coordinate which is 1,
	by 
		$e_k\mapsto \frac{1}{\sqrt{2}}(e_{k}-e_\ell)$, 
		$e_\ell\mapsto \frac{1}{\sqrt{2}}(e_{k}+e_\ell)$
	and
		$e_i\mapsto e_i$ if $i\notin \{k,\ell\}$.
	We have that
	\begin{align}
		\label{act-by-orthogonal}
		\begin{split}
		(L_{O_{k\ell}}f)(\x)	
		&=
		\pr_\cQ\Bigg(
		c
		\Big( 
			\frac{1}{4}(x_k-x_\ell)^4+\frac{1}{4}(x_k+x_\ell)^4+\sum_{i\notin \{k,\ell\}} x_i^4
		\Big)\\
		&\hspace{1cm}
		+d
		\Big(\frac{1}{4} (x_k^2-x_\ell^2)^2+
			\sum_{i<k} \frac{1}{2} x_{i}^2(x_k-x_{\ell})^2+
			\sum_{\substack{j> k,\\ j\neq \ell}} \frac{1}{2} (x_k-x_{\ell})^2x_{j}^2\\
			&\hspace{1cm}+
			\sum_{\substack{i< \ell,\\ i\neq k}} \frac{1}{2} x_{i}^2(x_{k}+x_\ell)^2+
			\sum_{j>\ell} \frac{1}{2} (x_{k}+x_\ell)^2x_{j}^2+
			\sum_{i,j\notin\{k,\ell\}}x_i^2x_j^2
		\Big)
		\Bigg).
	\end{split}
	\end{align}
	Since $L_{O_{k\ell}}f=f$, 
	it follows in particular from \eqref{act-by-orthogonal}
	by considering the coefficient at $x_k^4$
		that $c=\frac{1}{2}c+\frac{1}{4}d$, or equivalently, $d=2c$.
	Hence,
		$$f(\x)=c\Big(\sum_{i=1}^n x_i^4+2\sum_{1\leq i<j\leq n}x_i^2x_j^2\Big)=cr(\x).$$
	This concludes the proof of the lemma.
\end{proof}

Finally we can prove Theorem \ref{vrad-of-our-sets}.


\begin{proof}[Proof of Theorem \ref{vrad-of-our-sets}]
Using \eqref{revBS-dif-metric} of Corollary \ref{BSr-in-d-metric} for $L=\nn_\cQ$ and
\ref{NN-self-dual-in-d} of Lemma \ref{dualities-diff-metric} 
implies that
\begin{equation}
    \label{120123-2028}
      {\frac{2}{(n+4)(n+6)}}
      \leq 
      \big(\vrad(\widetilde{\nn_\cQ})\big)^2.
\end{equation}
\smallskip

\noindent\textbf{Claim 1.} For $n\geq 5$ we have that 
\begin{equation}
	\label{230508-1825}
		\displaystyle\frac{1}{2n^2}\leq \frac{2}{(n+4)(n+6)}.
\end{equation}
\smallskip

\noindent\textit{Proof of Claim 1.} 
Multiplying \eqref{230508-1825} by $2n^2(n+4)(n+6)$ and rearranging terms, it follows that \eqref{230508-1825}
is equivalent to
\begin{equation}
	\label{230508-1826}
		3n^2-10n-24\geq 0.
\end{equation}
Since the local minimum $x_0$ of the quadratic function $f(x)=3x^2-10x-24$ is equal to $x_0=\frac{5}{3}$ and $f(5)=1$,
this in particular implies that \eqref{230508-1826} holds true. \hfill$\square$

\bigskip
 
Now \eqref{120123-2028} and Claim 1 imply that
\begin{equation}
    \label{lower-bound-NN}
      (\sqrt{2}n)^{-1}
      \leq 
     \vrad(\widetilde{\nn_\cQ})
\end{equation}
Using Theorem \ref{RS} for 
    $K=\widetilde{\Lf_\cQ}$ (resp.\ $K=\widetilde{\CP_\cQ}$)
together with 
    \ref{inclusion-NN-Lf} (resp.\ \ref{inclusion-NN-CP})
    of Lemma \ref{inclusion-of-sections}
gives
\begin{equation}
    \label{vrad-bounds-Lf}
    {
    (8\sqrt{2}n)^{-1}
	}
    \leq 
    \frac{1}{8}\vrad(\widetilde{\nn_{\cQ}})
    \leq
    \vrad(\widetilde{\Lf_{\cQ}}),
\end{equation}
\begin{equation}
    \label{vrad-bounds-cp}
    {
	(16\sqrt{2}n)^{-1}
	}
	\leq
    \frac{1}{16}\vrad(\widetilde{\nn_{\cQ}})
    \leq
    \vrad(\widetilde{\cp_{\cQ}}).
\end{equation}
Since by Lemma \ref{Santalo-of-Lf} the origin is the Santal\'o point of $\widetilde{\Lf_\cQ}$,
using  \eqref{BS-dif-metric} of Corollary \ref{BSr-in-d-metric} for $L=\Lf_\cQ$ and
\ref{dual-of-Pos-in-d} of Lemma \ref{dualities-diff-metric} implies that
    \begin{equation}
    \label{130123-0005}
        \vrad(\widetilde{\Pos_\cQ})
        \leq 
        \Big(\frac{8}{(n+4)(n+6)}\Big)^{1-\frac{2n-1}{n^2+n-1}}
        (\vrad(\widetilde{\Lf_\cQ}))^{-1}.
    \end{equation}
\smallskip

\noindent\textbf{Claim 2.} For $n\geq 3$ we have that 
\begin{equation}
	\label{230508-2055}
		\Big(\frac{8}{(n+4)(n+6)}\Big)^{1-\frac{2n-1}{n^2+n-1}}
		\leq
		\frac{9}{n^2}.
\end{equation}
\smallskip

Before we prove Claim 2 we establish two preliminary results.
\bigskip

\noindent\textbf{Claim 2.1.} For $n\in \NN$ we have that 
\begin{equation}
	\label{230508-2110}
		\frac{2n-1}{n^2+n-1}\leq \frac{2}{n}.
\end{equation}
\smallskip

\noindent\textit{Proof of Claim 2.1.} 
Multiplying \eqref{230508-2110} by $n(n^2+n-1)$ and rearranging terms, it follows that \eqref{230508-2110}
is equivalent to $3n-2\geq 0$, which clearly implies Claim 2.1. \hfill$\square$

\bigskip

\noindent\textbf{Claim 2.2.} For $n\in \NN$ we have that 
\begin{equation}
	\label{230508-2115}
		\frac{8n^2}{(n+4)^2}
        \Big(\frac{(n+4)^2}{8}\Big)^{\frac{2}{n}}
        < 9.
\end{equation}
\smallskip

\noindent\textit{Proof of Claim 2.2.} 
Let $g(x)=\frac{8x^2}{(x+4)^2}
        \Big(\frac{(x+4)^2}{8}\Big)^{\frac{2}{x}}$.
We have that 
$$g'(x)
=
\frac{2^{4-\frac{6}{x}} \big((x+4)^2\big)^{\frac{2}{x}} 
\Big(x (6+\log 8)-(x+4) \log \left((x+4)^2\right)+4 \log 8\Big)}{(x+4)^3}.
$$
Note that for $x>0$, 
$g'(x)\geq 0$
iff
$h(x)\geq 0$,
where 
$$
h(x)=x (6+\log 8)-(x+4) \log \left((x+4)^2\right)+4 \log 8.
$$
Since 
$$h'(x)=4+\log 8-\log((x+4)^2),$$
it follows that $h(x)$ is increasing on 
$(0,x_m]$, where $x_m:=2\sqrt{2}e^{2}-4\approx 16.9$ and 
decreasing for $x>x_m$.
Since 
    $$h(1)=6+5\log 8-5\log 25\approx 0.3>0,$$ 
it follows that $h(x)$ has only one zero $x_0$ 
on the interval $[1,\infty)$, where $x_0>x_m$.
From $h(38)\approx 1.37>0$ and $h(39)\approx -0.047$, it follows
that $x_0\in (38,39)$.
Finally, $g(38)\approx 8.6995<9$ and $g(39)=8.6997<9$, 
proves Claim 2.2.\hfill$\square$
\bigskip

Now we are ready to prove Claim 2.
\bigskip

\noindent\textit{Proof of Claim 2.}
We have that
\begin{align*}
n^2\Big(\frac{8}{(n+4)(n+6)}\Big)^{1-\frac{2n-1}{n^2+n-1}} 
&\leq 
n^2\Big(\frac{8}{(n+4)^2}\Big)^{1-\frac{2n-1}{n^2+n-1}}
\leq
\frac{8n^2}{(n+4)^2}
\Big(\frac{(n+4)^2}{8}\Big)^{\frac{2}{n}}
<9,
\end{align*}
where we used that $n+6>n+4$ and
$1-\frac{2n-1}{n^2+n-1}>0$ for $n\geq 3$ (Claim 2.1) in the first inequality,
$\frac{2n-1}{n^2+n-1}\leq \frac{2}{n}$ in the second
and Claim 2.2 in the third.
\hfill$\square$

\smallskip

Using \eqref{vrad-bounds-Lf} and Claim 2 in \eqref{130123-0005}, it follows that
    \begin{equation}
    \label{vrad-pos-upper-bound}
        \vrad(\widetilde{\Pos_\cQ})
        \leq 
	2^3\cdot3^2\cdot\sqrt{2} \cdot n^{-1}.
    \end{equation}
Now \eqref{vrad-bounds-cp} and \eqref{vrad-pos-upper-bound} imply \eqref{vrad-asymptotics-with-constants} holds,
which proves the theorem.
\end{proof}

\end{document}